\documentclass{amsart}

\usepackage{amsmath}
\usepackage{amsthm}
\usepackage{float}
\usepackage{upgreek}
\usepackage{verbatim}
\usepackage{hyperref}
\usepackage{mathrsfs}
\usepackage{bm}
\usepackage{dsfont}
\usepackage{amsfonts}
\usepackage{amssymb}
\usepackage{bbm}
\usepackage{csquotes}
\usepackage{stmaryrd}
\usepackage{mathtools}
\usepackage[font=small,labelfont=bf]{caption}
\usepackage{subcaption}
\usepackage{algorithm}
\usepackage{algpseudocode}

\usepackage[margin=1 in]{geometry}
\setlength{\parindent}{10 pt}

\usepackage{seqsplit}
\usepackage{xstring}

\usepackage{enumitem}
\usepackage{cleveref}
\usepackage{graphicx}
\usepackage[dvipsnames]{xcolor}
\usepackage{soul}
\sethlcolor{yellow}

\usepackage{empheq}
\usepackage{tikz}
\usetikzlibrary{shapes.geometric, arrows, calc}

\numberwithin{equation}{section}

\newtheorem{assumption}{Assumption}

\newtheorem{theorem}{Theorem}
\newtheorem{lemma}[theorem]{Lemma}

\newtheorem{remark}{Remark}

\DeclareMathOperator*{\argmax}{arg\,max}
\DeclareMathOperator*{\argmin}{arg\,min}

\newcommand{\A}{\mathcal{A}}
\newcommand{\R}{\mathbb R}
\newcommand{\PP}{\mathbb P}
\newcommand{\N}{\mathbb N}
\newcommand{\I}{\mathbb I}

\newcommand{\E}{\mathbb E}

\newcommand{\e}{\varepsilon}
\newcommand{\F}{\mathcal{F}}

\newcommand{\NN}{\mathcal{N}}

\newcommand{\ZZ}{\mathcal{Z}}

\newcommand{\YY}{\mathcal{Y}}

%
%

\newcommand{\revised}{\textcolor{black}} 


\title{\vspace{-1.5cm}A neural network approach to high-dimensional optimal switching problems with jumps in energy markets\footnote{To appear in SIAM Journal on Financial Mathematics.}}

\author{Erhan Bayraktar}
\address{Department of Mathematics, University of Michigan, 
Ann Arbor, MI}
\email{erhan@umich.edu}

\author{Asaf Cohen}
\address{Department of Mathematics, University of Michigan, 
Ann Arbor, MI}
\email{asafc@umich.edu}

\author{April Nellis}
\address{Department of Mathematics, University of Michigan, 
Ann Arbor, MI}
\email{nellisa@umich.edu}

\begin{document}

\maketitle 

\begin{abstract}
We develop a backward-in-time machine learning algorithm that uses a sequence of neural networks to solve optimal switching problems in energy production, where electricity and fossil fuel prices are subject to stochastic jumps. We then apply this algorithm to a variety of energy scheduling problems, including novel high-dimensional energy production problems. Our experimental results demonstrate that the algorithm performs with accuracy and experiences linear to sub-linear slowdowns as dimension increases, demonstrating the value of the algorithm for solving high-dimensional switching problem.

\smallskip
\noindent \textbf{Keywords.} Deep neural networks, forward-backward systems of stochastic differential equations, optimal switching, Monte Carlo algorithm, optimal investment in power generation, planning problems
\end{abstract}


\section{Introduction}
\label{sec:intro}
Energy production and energy markets play a large role in the modern economy and as such, it is beneficial to both producers and consumers for electricity production to be optimized. 
Energy producers, in particular, desire to operate efficiently despite the inherent volatility of both electricity demand and the availability of various fuels.
Determining the correct operating strategy for an energy production facility therefore requires dynamic adjustment as the underlying drivers of price and profit fluctuate stochastically with supply and demand.
Recent supply-chain issues in global markets have further underlined the volatility of prices and the need for flexible optimization methods which allow producers to dynamically adapt to changes in the energy markets.

There are multiple perspectives from which to approach problems related to energy production and pricing. 
In the case where a model includes only a single power generation facility, this facility is considered a price-taker, and its production decisions have little impact on the overall flow of electricity supply and demand. The facility's only goal is to maximize its own profit, as it is not the sole electricity producer in its region. 
To this end, the facility is able to alter its own production capacity in response to exogenous outside factors. 
However, we can also consider a situation in which an agent oversees multiple power generation facilities, and has the option to bring them online or remove them. 
Each of these facilities is fueled by one of a selection of fuel sources, ranging from coal to solar energy. 
The larger scale of this operation makes this agent a price-setter, and so investment decisions affect both electricity spot prices and their own profits. 
In this case, penalties could also be incurred for failing to satisfy electricity demand. 
Our focus will be on the former case, but our algorithm could easily be extended to other situations. 

These situations can be modeled as optimal switching problems, and in our paper we present a machine learning algorithm that is able to solve optimal switching problems of higher dimensions than previously studied, allowing us to consider a wider selection of fuel sources than in existing literature.
Such energy production switching problems consist of a stochastic state process (such as exogenous electricity demand and fuel prices) which drives an objective function. At discrete ``switching times" a production decision is chosen from a discrete set of possible ``modes" of production (which can model factors like capacity level or fuel type).
The controller switches between modes based on the current value of the state variable, but must pay a penalty for such switches (usually monetary, reflecting resource redirection). 
The class of optimal switching problems is one that has both been investigated from an analytical perspective \cite{HamaJean2007, BayEg2010, ChasElieKhar12, ChasRich19} and applied to fields from finance \cite{LiQuZhang} to cloud computing \cite{FeinZhang2014}, but these problems remain difficult to solve numerically in higher dimensions. 
In the realm of energy markets, mathematicians have used optimal switching to model power plant scheduling \cite{CarLud08, PorchetTouziWarin2009},  electricity spot prices \cite{Aidetal14}, and run-of-river hydroelectric power generation \cite{Olofsson2021}. 
Energy storage problems \cite{Felix2012, Malyscheff, Thompson} are another popular application of optimal switching, but we focus on scheduling and production problems in our current work.

Various approaches have been taken to avoid a grid-based method, as grids are very susceptible to the so-called ``curse of dimensionality", including many Monte Carlo-based methods like \cite{Warin2018,Aidetal14}. However, such probabilistic approaches are also limited in the dimension they can handle, as most rely on regression over a number of basis functions that grows quickly with the dimension of the state space. 
In recent years, the applications of machine learning to mathematical problems has become more and more common, many inspired by seminal works such as \cite{HanJentzenE}, which trains a neural network to minimize the global error associated with the backward stochastic differential equation (BSDE) representation of certain classes of partial differential equations (PDEs). 
Expanding upon this work, neural networks have been found to accurately estimate the solutions of a variety of partial differential equations of varying complexities when used in different configurations, as in \cite{PhamML19, Sirignano18, BeckerCheriditoJentzen20, GermainPhamWarin22}. 
\revised{In addition, the early paper \cite{Barrera2006} utilized neural networks to solve for an optimal gas consumption strategy under uncertainty.}
It follows that such neural network-based methods can be extended to solve optimal switching problems. 
Our algorithm draws upon the neural-network-based deep backward dynamic programming approach introduced in \cite{PhamML19} and extends it to situations where the reflection boundary is no longer a known function, like the payoff of an American option.
Instead, the reflection boundary becomes dependent on the optimal control decision at the given point in time. 
We also introduce jumps in the state process, which change the associated formulation from a partial differential equation to a partial integro-differential equation (PIDE). 
These jumps are incorporated into the model to better simulate the volatility inherent in electricity and fossil fuel markets.
\revised{The recent work \cite{Gnoatto2022} extends \cite{HanJentzenE} to a setting with jumps, and \cite{FreyKock2022} applies neural networks to PIDEs that arise in insurance mathematics. }
In this paper, we extend \cite{PhamML19} to handle both jumps and switches in a wider range of problems.
This algorithm is able to handle high-dimensional problems well because the time needed for artificial neural network computations grows only linearly in the dimension of the state variable and suffers only minimal slowdowns as the dimension increases, as demonstrated in \Cref{sec:numerics}. Our code can be found at \url{https://github.com/april-nellis/osj}.

In \Cref{sec:switching}, we introduce the general stochastic model of an optimal switching problem. 
In \Cref{sec:algorithm}, we provide some background on neural networks and detail the proposed machine learning algorithm. 
In \Cref{sec:numerics} we discuss numerical examples of energy scheduling and capacity investment, and demonstrate the high-dimensional abilities of our algorithm\footnote{All calculations in this paper were performed on a 10-core CPU, 16-core GPU 2021 Macbook Pro with M1 Pro chip, without using GPU acceleration.}. 
In \Cref{sec:proof}, we verify the convergence of the neural networks in our proposed algorithm to the true value functions. 	


\section{Stochastic Model}
\label{sec:switching}
\subsection{Setup}
\label{sec:setup}
The goal of our paper is to numerically solve high-dimensional optimal switching problems related to energy production.
Consider a filtered probability space $(\Omega, \F, \{\F_t\}_t, \PP)$ satisfying the usual conditions and supporting a $d$-dimensional Wiener process $W$ and a one-dimensional Poisson random measure $\NN(de, ds)$ with intensity measure $\nu(de) ds$, where $\int_{\R^d} \nu(de) = \lambda\geq 0$.
 Consider further a $d$-dimensional jump-diffusion process, given by
\begin{equation}
\label{eq:state}
X_t = x_0 + \int_0^t b(X_s) ds + \int_0^t \sigma(X_s) dW_s + \int_0^t \int_{\R^d} \beta(X_{s^-}, e) \NN(de,ds),\ t \in [0, T], x_0 \in \R^d.
\end{equation}
Here, $b: \R^d \to \R^d$, $\sigma:\R^d \to \R^{d \times d}$, and $\beta: \R^d \times E \to \R^d$, where $d$ is a relatively large dimension and $E \subseteq \R^d$.
\begin{assumption}
\label{assm_beta_etc}
We assume that 
\begin{enumerate}
\item The functions $b$, $\sigma$, and $\beta$ are Lipschitz, and $\beta$ is a measurable map such that there exists $K > 0$ for which
\begin{equation*}
\sup_{\xi \in E} |\beta(0, \xi)| \leq K \text{   and   } \sup_{\xi \in E} |\beta(x, \xi) - \beta(x', \xi)| \leq K|x - x'|,\ \forall x, x' \in \R^d.
\end{equation*}
\item The function $\beta(x, \xi)$ has Jacobian such that $\nabla \beta(x, \xi) + I_d$ is invertible with a bounded inverse. 
\end{enumerate}
\end{assumption}

\begin{remark}
From the appendices of \cite{BouchardElieApprox}, the conditions in \cref{assm_beta_etc} imply the existence of a unique adapted solution $X_t$ to \cref{eq:state}.
\end{remark}

This stochastic process drives an optimal switching problem which we will solve using a series of artificial neural networks. Variations on this problem have been studied in previous theoretical papers such as \cite{HamZhang10} and \cite{ElieKharroubi14}, and can be summarized as trying to find the optimal choice of control $\bm{a} = \{(\tau_k, \alpha_k)\}_{k \in \N}$, where $\alpha_k \in \I =:\{1,\ldots, I\}$ is the regime/mode which is selected at switching time $\tau_k$, such that $\alpha_k$ is $\F_{\tau_k}$-measurable, for any $k \in \N$. We set $\tau_0 = 0$ to denote that a switch is allowed as soon as the process begins. The initial mode of the system, $i$, is therefore denoted by $\alpha_{-1} = i$. The control process $(a_s)_{s \in [0,T]}$ associated with $\bm{a}$ is denoted by:
\begin{equation*}
a_s = \sum_k \alpha_k \mathbf{1}_{\{\tau_k \leq s < \tau_{k+1}\}}.
\end{equation*}
It represents the current mode of the system and the set of such strategies is given by $\A$. The set of \textit{admissible strategies} is defined as all strategies fitting the above description which contain only a finite, though potentially random, number of switches, and is denoted $\A$. The set of admissible strategies that begin in mode $i$ at initial time $t$ is denoted $\A_{t,i}$. The expected payoff function associated with the control $\bm{a} \in \A_{t,i}$ is given by
\begin{equation}
\label{eq:J-payoff}
  J(t, x, i, \bm{a}) := \E\left[\int_t^T f_{a_s}(s, X_s) ds + g^{a_T}(X_T) - \sum_{k \in \N\backslash\{0\}} C_{\alpha_{k-1}, \alpha_k}(X_{\tau_k}) \mathbf{1}_{\{t \leq \tau_k < T\}} \Big| X_t = x, a_t = i \right],
\end{equation}
where $f_i:\R \times \R^d \to \R $ is the running profit in mode $i$, $g^i: \R^d \to \R$ is the terminal profit if ending in mode $i$, and $C_{i,j}: \R^d \to \R$ is the cost of switching modes from $i$ to $j$ for a given value of the state variable, where $i,j \in \I$.
Here and in the sequel, $\mathbf{1}_B$ is the indicator of the event $B$, such that $\mathbf{1}_B(\omega) = 1$ if $\omega \in B$ and 0 otherwise.
We can define the initial status of the system as $\F_0 := \{X_0 = x_0, \alpha_{-1}  = i\}$. All expectations are conditioned on $\F_0$ when not otherwise specified.
\begin{assumption}
\label{assm_cost}
To discourage an optimal strategy with multiple instantaneous switches, we make the following assumptions on the switching costs. There exists $\epsilon > 0$ such that
\begin{equation*}
\begin{split}
&C_{i,j}(x) \geq \epsilon > 0,\ \forall i,j \in \I,\ \forall x \in \R^d,\\
&C_{ii}(x) \equiv 0,\ \forall i \in \I,\\
&C_{i,j}(x) + C_{j,k}(x) \geq C_{i,k}(x),\ \forall i,j,k \in \I,\ \forall x \in \R^d.
\end{split}
\end{equation*}
These assumptions are standard in optimal switching problems, encoding ``direct" switches between states, and are enforced throughout the paper. We also make the Lipschitz assumption that there exists a constant $[C]_l$ such that 
\begin{equation*}
|C_{i,j}(x_1) - C_{i,j}(x_2)| \leq [C]_l ||x_1 - x_2||,
\end{equation*}
for all $x_1, x_2 \in \R^d$ and all $i,j \in \I$.
\end{assumption}
We also make certain assumptions on the running profit and terminal profit functions throughout the paper. 
\begin{assumption}.
\label{assm_fg}
\begin{enumerate}
	\item There exists a constant $[f]_l$ such that for every $t_1, t_2 \in [0,T]$ and $x_1, x_2 \in \R^d$,
		\begin{equation*}
			|f_i(t_1, x_1) - f_i(t_2, x_2)| \leq [f]_l(|t_1 - t_2|^{1/2} + ||x_1 - x_2||),\ \forall i \in \I.
		\end{equation*}
		
	\item We assume $\max_{i \in \I} \sup_{0 \leq t \leq T} |f_i(t,0)| < \infty$ and $f_i$ is square-integrable on $[0,T]$ for all $i$ in $\I$.
	  
	\item The functions $\{g^i\}_{i \in \I}$ are Lipschitz continuous and satisfy linear growth conditions.
\end{enumerate}
\end{assumption}
The value function is given by 
\begin{equation*}
    V(t, x, i) := \sup_{\bm{a} \in \A_{t,i}} J(t, x, i, \bm{a}).
\end{equation*}
It is a standard result in control theory (see \cite{ElKaroui97, Biswas10}) that the solution to this optimization problem can be represented as a real-valued stochastic process $Y^i_t = V(t, X_t, i)$ that solves the stochastic differential equation
\revised{\begin{equation}
\label{eq:BSDE}
\begin{split}
&Y^i_t = g^i(X_T) + \int_t^T f_i(s, X_s) ds - \int_t^T (Z^i_s)^T dW_s - \int_t^T \int_{\R^d} \Delta Y^i_s(e) \tilde \NN(de, ds) + R^i_T - R_t^i,\\
&Y_t^i \geq \max_{j \neq i}\{-C_{i,j}(X_t) + Y^j_t\},\ t\in[0,T], \\
&\int_0^T ( Y^i_t - \max_{j \neq {i}}(-C_{i,j}(X_t) + Y^j_t)) dR_t^i  = 0.
\end{split}
\end{equation}
where $\tilde \NN(de, ds) := \NN(de,ds) - \nu(de) ds$ and the reflection boundary $R^i_t$ is a nondecreasing process with $R^i_0 = 0$. }
Further, the auxiliary processes $Z^i_t$ and $\Delta Y^i_t(e)$ can be defined as
\begin{align*}
&Z^i_t := \sigma^T(X_t) V_x(t, X_t, i) \in \R^d, \\
&\Delta Y^i_t(e) := V\big(t, X_{t^-} + \beta(X_{t^-}, e), i\big) - V(t, X_{t^-}, i) \in \R.
\end{align*}

The stochastic differential equations for $X_t$ and $Y_t$ comprise a system of forward-backward stochastic differential equations (FBSDEs). In addition, the continuation values associated with beginning in mode $i$ at time $t_1$ and remaining in that mode over the interval $[t_1, t_2]$ for $t_1, t_2 \in [0,T]$ can be defined via \cref{eq:BSDE} as
\begin{equation*}
\tilde Y^i_{t_1} := Y^i_{t_2} + \int_{t_1}^{t_2} f_i(s, X_s) ds - \int_{t_1}^{t_2} (Z^i_s)^T dW_s - \int_{t_1}^{t_2} \int_{\R^d} \Delta Y^i_s(e) \tilde \NN(de, ds).
\end{equation*}
Approximation of certain continuation values will play a key role in approximating the value function of interest. 
Our goal in the rest of this paper is to present an efficient algorithm for calculating $Y^i$ where $X$ is a high-dimensional state process with finite-variational jumps. In \cref{sec:algorithm}, we first provide some background on neural networks, then we present the details of the Optimal Switching with Jumps (OSJ) algorithm. 	

\section{Optimal Switching with Jumps (OSJ) Algorithm}
\label{sec:algorithm}
\subsection{Neural Network Structure}
We utilize feedforward neural networks, which are in essence a series of weighted sums of inputs composed with simple functions in such a way that unknown functions can be approximated. Training data enters the network in the first layer, and at each layer a weighted sum of the inputs is computed using the choice of parameters assigned to the nodes in that layer to create an affine function. The output of each layer is processed by an activation function before becoming the input of the next layer, and the final layer produces the desired output of the network.

For a network of depth $\delta$ with $\delta_\ell$ nodes in layer $\ell$, there are $\sum_{\ell = 0}^{\delta-1} \delta_\ell (\delta_{\ell + 1}+1) = \bar \delta$ parameters, represented as a whole as $\theta$.  This $\theta$ is chosen from all possible parameters in the parameter space $\Theta_\delta$, a compact subset of $\R^{\bar \delta}$ defined as
\begin{equation*}
	\Theta_\delta := \{\theta \in \R^{\bar \delta},\  ||\theta||_{\infty} \leq \gamma_\delta \},
\end{equation*}
where $\gamma_\delta$ is positive and chosen to be very large.
We can then define the set of neural networks that we are working with as the union over $\delta \in \N$ of all the neural networks of depth $\delta$ with $\bar \delta$ total parameters. This formulation accomplishes two things. First, the universal approximation theorem of \cite{Hornik89} asserts that this set of neural networks is dense in the set of continuous and measurable functions which map from $\R^d \to \R^s$, for any dimension $s$, and so are universally good approximators. Second, the parameter space associated with this union, $\Theta = \cup_{\delta \in \N} \Theta_\delta$, represents the set of all possible weights that can be assigned to the nodes in the neural network and is compact. Therefore, when trying to minimize the loss function associated with our problem (which will be described in the next subsection), a minimizing $\theta^*$ exists.

The network therefore ``learns" the function of interest by adjusting $\theta$ via multiple iterations of an optimization algorithm. In our work, we use the Adam optimizer \cite{adamopt} applied to a four-layer neural network with $d + 10$ nodes in each layer and $tanh$ as the chosen activation function. We fix the input dimension as $d$, and set the output dimension as $d_1 = 1 + d + 1$ because $Y^i_t \in \R, Z^i_t \in \R^d$, and $\Delta Y^i_t \in \R$. 

\subsection{Algorithm}
To perform the numerical calculations, we discretize the continuous time interval $[0,T]$ using a regular grid $\pi = \{t_n\}_{n=0}^M = \{nT/M\}_{n=0}^M$, where $T/M = \Delta t$. 
We denote the paths of the discrete approximation as $X^\pi$ and generate a large number of paths of $X^\pi$ starting from a desired initial condition $x_0$. We later use these paths as training data for the neural networks. 
At this point, we do not impose a specific approximation scheme, but require that the in-time convergence be of at least strong order 0.5 for convergence of the neural network value function in \cref{thm:main} and of strong order 1.0 to achieve an auxiliary result regarding the performance of the neural network-generated strategies given in \cref{thm:strat}. We describe the approximation schemes used in our specific examples within the expository portions of \cref{sec:ex_CL} and \cref{sec:ex_Aid}. In-depth discussion of weak- and strong-order approximations of jump-diffusion processes can be found in \cite{KloedenPlaten1999}.

\revised{We also discretize the switching times, introducing a grid $\mathfrak{R}$ where the grid spacing is of size $T/\sqrt{M}$, meaning that $|\mathfrak{R}| \sim O(M^{-1/2})$. The process is able to switch modes at time $t_n$ where $t_n \in \mathfrak{R}$, while evolving as an uncontrolled jump-diffusion process when $t_n \notin \mathfrak{R}$.}

The continuation values between time steps of the grid $\pi$ will be learned by the neural network on a mode-by-mode basis. We denote the neural network that learns the continuation value at $t_n$ in mode $i$ as $\YY^i_n(X^\pi_n, \theta^i_1)$, the neural network that learns the derivative of the value function at the same stage as $\ZZ^i_n(X^\pi_n, \theta^i_2)$, and the neural network that learns the jump sizes for $Y$ as $\Delta \YY^i_n(X^\pi_n, \theta^i_3)$. In practice, these neural networks are treated as one larger network with combined parameter vector $\theta^i = (\theta^i_1, \theta^i_2, \theta^i_3) \in \Theta = \Theta_\delta$ for some $\delta$ corresponding to the chosen architecture of the neural network. The functions generated by the optimal choice of $\theta^i$ are defined as
\revised{\begin{equation}
\label{eq:NN_funcs}
\begin{cases}
	&\tilde \YY^i_n(X^\pi_n) := \YY^i_n( X^\pi_n,\theta^{*,i}_{n,1}),\\
	&\hat \ZZ^i_n(X^\pi_n) := \ZZ^i_n( X^\pi_n, \theta^{*,i}_{n,2}),\\
	&\widehat{\Delta \YY}^i_n(X^\pi_n) := \Delta \YY^i_n(X^\pi_n, \theta^{*,i}_{n,3}).
\end{cases}
\end{equation}
The continuation values $\tilde \YY^i_n(\cdot)$ are then used to calculate the value functions
\begin{equation}
\label{eq:NN_Yhat}
\revised{\hat \YY^i_n(X^\pi_n) := \mathbf{1}_{t_n \in \mathfrak{R}} \max\big\{\tilde \YY^i_n(X^\pi_n), \max_{j\neq i}(\tilde \YY^j_n(X^\pi_n) - C_{i,j}(X^\pi_n))\big\} + \mathbf{1}_{t_n \notin \mathfrak{R}}\tilde \YY^i_n(X^\pi_n)}
\end{equation}}
The algorithm in its entirety is described in \cref{osj-alg}. 

\begin{algorithm}
\caption{OSJ Algorithm}
\label{osj-alg}
	\begin{algorithmic}[1]
	\State{Generate paths of the stochastic process $\{X^\pi_n\}_{n=0}^M$ as well as $\Delta W_n := W_{t_{n+1}} - W_{t_n}$ and $\Delta \tilde N_n :=\int_{t_n}^{t_{n+1}} \int_{\R^d} \tilde \NN(de, ds)$ for each sample path. Store as training data.}
	 \State{Train $\hat \YY^i_M \equiv g^i(x),\ \forall i \in \I$.}
	 \For{ $n = M -1, \ldots, 2, 1,0$}
	 	\For{all $i \in \I$} 
		\State{Train a neural network to find $\theta^{*,i}_n = (\theta^{*,i}_{n,1},\theta^{*,i}_{n,2},\theta^{*,i}_{n,3}) \in \Theta$ which minimizes} 
    	\begin{equation}
    	\begin{split}
    	\label{eq:loss}
    		L^i_n(\theta) =  \E\Big|&\hat \YY^i_{n+1}(X^\pi_{n+1}) - \YY^i_n(X^\pi_n, \theta_1)\\
		&+ f_i(t_n, X^\pi_n) \Delta t - \ZZ^i_n(X^\pi_n, \theta_2)\Delta W_n -\Delta \YY^i_n(X^\pi_n, \theta_3)\Delta \tilde N_n \Big|^2.
	\end{split}
	\end{equation}
	\State{Define $\tilde \YY^i_n(\cdot)$, $\hat \ZZ^i_n(\cdot)$, and $\widehat{\Delta \YY}^i_n(\cdot)$ as in \cref{eq:NN_funcs}.}
	\EndFor
	\For{all $i \in \I$}
		\State{Calculate $\hat \YY^i_n(\cdot)$ as in \cref{eq:NN_Yhat}.}
	\EndFor
	\State{The value function of interest is $V(0, x_0, i) = \hat \YY^i_0(x)$ where $X_0 = x$ and $\alpha_{-1} = i$.}
	\EndFor
	\end{algorithmic}
\end{algorithm}

The switching strategy arising from this algorithm is denoted $ \bm{a^{NN,M}}$ when the number of steps is chosen to be $M$. This strategy is a function of the value of the state variable and starting mode, such that the optimal mode at time $t_n$ is 
\begin{equation*}
\alpha^{NN}_n := \argmax_{j \in \I} \big(\tilde \YY^j_n(\cdot) - C_{\alpha^{NN}_{n-1},j}(\cdot)\big),
\end{equation*}
 where the optimal mode at time $t_{n-1}$ is $\alpha^{NN}_{n-1} \in\I$ and $\alpha^{NN}_{-1} = \alpha_{-1} = i$, the starting mode.
 
 \begin{remark}
\label{rmk:disc}
For \cref{thm:strat} we require that the discrete approximation of $(X_t)_{t \in [0,T]}$ is of strong order 1.0 (instead of strong order 0.5 which is sufficient for \cref{thm:main}). This excludes the standard Euler--Maryama discretization strategy, but there are a variety of other options. One good choice is a jump-adapted strong order 1.0 discretization scheme, first described in \cite{Platen1982}, and other efficient schemes are described in \cite{maghsoodiMSE} and \cite{maghsoodiExact}. A survey of strong approximations of jump-diffusions is given in \cite{PlatenBL2007} \revised{and expanded upon in chapters 6 and 8 of \cite{PlatenBL2010}}. Since the path generation is done at the start of the algorithm and the paths can be stored for re-use, the downside of increased complexity is outweighed by the benefit of improved convergence rates. In addition, if paths can be simulated exactly (as is the case of the example in \cref{sec:ex_CL}), the discretization error in $X$ can be avoided altogether, though there remains discretization error from the discrete switching grid.
\end{remark}

\subsection{Convergence}
The natural questions are (1) whether this algorithm will train a neural network that converges to the correct value function of interest, and (2) how the switching strategy generated by this algorithm performs in comparison to the true optimal switching strategy. We first focus on the former question.

The mode-wise maximum of the errors between the FBSDE system $(Y^i_t, Z^i_t, \Delta Y^i_t)$ and the functions learned by the neural network $(\hat\YY^i_n, \hat\ZZ^i_n, \widehat{\Delta \YY}^i_n)$, can be represented by
\begin{equation}
\begin{split}
\label{eq:err_list}
\mathcal{E}[(\hat\YY, \hat\ZZ, \widehat{\Delta \YY})), (Y, Z, \Delta Y)]:= &\max_{n = 0,1,\ldots,M} \E\Big[\max_{i \in \I} |Y^i_{t_n} - \hat \YY^i_n(X^\pi_n)|^2\Big]\\
&+ \sum_{n=0}^{M-1} \int_{t_n}^{t_{n+1}} \max_{i \in \I} \E ||Z^i_t - \hat\ZZ^i_n(X^\pi_n)||^2 dt\\
&+  \sum_{n=0}^{M-1} \int_{t_n}^{t_{n+1}} \max_{i \in \I} \E \left|\int_{\R^d} \Delta Y^i_t(e) \nu(de) - \widehat{\Delta \YY}^i_n(X^\pi_n)\right|^2 dt.
\end{split}
\end{equation} 

We also define auxiliary functions which are integral to the upcoming error definition and to the proofs of convergence themselves. These functions are formulated in a similar manner to that of the discretization of the FBSDEs presented in \cite{BouchardElieApprox} and are defined as
\begin{align}
& \hat y^i_n(X^\pi_n) := \E\left[\hat \YY^i_{n+1}(X^\pi_{n+1})|\F_n\right] + f_i(t_n, X^\pi_n) \Delta t, \label{eq:hat_y}\\
&\hat z^i_n(X^\pi_n) := \frac{1}{\Delta t} \E\left[\hat \YY^i_{n+1}(X^\pi_{n+1}) \Delta W_n|\F_n\right], \label{eq:hat_z}\\
&\hat u^i_n(X^\pi_n) :=  \frac{1}{ \lambda \Delta t}\E\left[\hat \YY^i_{n+1}(X^\pi_{n+1})\Delta \tilde N_n|\F_n\right] \label{eq:hat_u},
\end{align}
where $\F_n := \F_{t_n}$ for convenience. 
According to the martingale representation theorem of \cite{TangLi}, there exist processes $(z^i_s)_{t_n < s \leq t_{n+1}}$ and $(u^i_s)_{t_n < s \leq t_{n+1}}$ such that 
\begin{equation}
\label{eq:loss-bsde}
 \hat \YY^i_{n+1}(X^\pi_{n+1})= \hat y^i_n(X^\pi_n) - f_i(t_n, X^\pi_n) \Delta t +  \int_{t_n}^{t_{n+1}} z^i_s d W_s +  \int_{t_n}^{t_{n+1}} \int_{\R^d} u^i_s(e) \tilde \NN(de, ds).
 \end{equation}
Using the Markov property of the processes and It\^o isometry, we can also see that the following relationships hold:
\begin{align}
\hat z^i_n(X^\pi_n) &= \frac{1}{\Delta t} \E\left[\int_{t_n}^{t_{n+1}} z^i_s ds | \F_n \right], \label{eq:z_hat}\\
 \hat u^i_n(X^\pi_n)  &= \frac{1}{\lambda \Delta t}\E\left[ \int_{t_n}^{t_{n+1}} \int_{\R^d} u^i_s(e) \nu(de)\ ds| \F_n \right]. \label{eq:u_hat}
\end{align}
Using these quantities, we define the set of neural network approximation errors as
\begin{equation}
\begin{split}
\label{eq:eps}
&\e^y_n := \sum_{i\in \I} \left( \inf_{\theta_1 \in \Theta} \E|\hat y^i_n(X^\pi_n) - \YY^i_n(X^\pi_n, \theta_1)|^2\right), \\
&\e^z_n :=\sum_{i\in \I} \left( \inf_{\theta_2 \in \Theta} \E ||\hat z^i_n(X^\pi_n) - \ZZ^i_{n}(X^\pi_n, \theta_2)||^2 \right), \\
&\e^u_n := \sum_{i\in \I} \left( \inf_{\theta_3 \in \Theta} \E |\hat u^i_n(X^\pi_n) -\Delta \YY^i_{n}(X^\pi_n, \theta_3)|^2 \right).
\end{split}
\end{equation}
These errors converge to 0 as the number of parameters in the neural network increases to infinity. More precisely, we can say that as $\delta \to \infty$ and correspondingly $|\Theta_\delta| \to \infty$, then $\e^y_n, \e^z_n, \e^y_n$ converge to 0. This follows from the Universal Approximation Theorem of \cite{Hornik89}. Therefore, we can use \cref{eq:eps} to express our convergence results as a function of the neural network approximation errors and the time discretization $M$.

\begin{theorem}[Convergence of OSJ Value Function]
\label{thm:main}
The error between the machine learning-generated functions $(\hat\YY^i, \hat\ZZ^i, \widehat{\Delta \YY}^i)$, and the functions associated with the true optimal switching problem $(Y^i, Z^i, \Delta Y^i)$ (as defined in \cref{eq:err_list}), is bounded by
\begin{equation}
\label{eq:thm}
\mathcal{E}[(\hat\YY, \hat\ZZ, \widehat{\Delta \YY})), (Y, Z, \Delta Y)] \leq C\sum_{n=0}^{M-1} \big(M\e^y_n + \e^z_n + \e^u_n\big) + \revised{C^{\e} \left(M^{-1/2 + \e}\right)},
\end{equation}
where $\e^y_n, \e^z_n, \e^u_n$ are defined in \cref{eq:eps}. \revised{In addition, $\e>0$ is chosen to be arbitrarily small and $C^\e$ has an inverse relationship with $\e$. The formulation of this error term is further discussed in \cref{sec:discretization}.}
\end{theorem}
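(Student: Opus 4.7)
The plan is to split the total error by inserting an intermediate reference scheme $(\bar Y^i_n, \bar Z^i_n, \bar U^i_n)_{n=0}^M$, obtained by running the discrete backward dynamic programming on the approximate paths $X^\pi$ with \emph{exact} conditional expectations (i.e.\ replacing every neural network in \cref{osj-alg} by the best $L^2$ projection onto $\sigma(X^\pi_n)$) and performing the switching maximum exactly on the coarse grid $\mathfrak{R}$. The triangle inequality then gives
\begin{equation*}
\mathcal{E}[(\hat\YY,\hat\ZZ,\widehat{\Delta\YY}),(Y,Z,\Delta Y)] \;\le\; 2\,\mathcal{E}[(\hat\YY,\hat\ZZ,\widehat{\Delta\YY}),(\bar Y,\bar Z,\bar U)] + 2\,\mathcal{E}[(\bar Y,\bar Z,\bar U),(Y,Z,\Delta Y)].
\end{equation*}
The second summand is the pure discretization error, which I would import from \cref{sec:discretization}: it absorbs the strong-order convergence of $X^\pi\to X$, the Euler-type discretization of the BSDE \cref{eq:BSDE}, and the coarse-grid switching approximation, yielding together the $C^\e M^{-1/2+\e}$ contribution in \cref{eq:thm}.

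The remaining work controls the pure neural-network error. The key computation is the loss decomposition: substituting the martingale representation \cref{eq:loss-bsde} into \cref{eq:loss} and using It\^o isometry together with the orthogonality of $\Delta W_n$, $\Delta\tilde N_n$ and $\F_n$-measurable integrands, I would write
\begin{equation*}
L^i_n(\theta) = \E|\hat y^i_n - \YY^i_n(X^\pi_n,\theta_1)|^2 + \Delta t\,\E\|\hat z^i_n - \ZZ^i_n(X^\pi_n,\theta_2)\|^2 + \lambda\Delta t\,\E|\hat u^i_n - \Delta\YY^i_n(X^\pi_n,\theta_3)|^2 + R^i_n,
\end{equation*}
where $R^i_n$ is an irreducible residual independent of $\theta$. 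Because the parameter blocks $\theta_1,\theta_2,\theta_3$ are disjoint, the minimizer of $L^i_n$ minimizes each summand individually, and after summing over modes the trained networks obey $\sum_i\E|\tilde\YY^i_n - \hat y^i_n|^2\le \e^y_n$, $\sum_i\E\|\hat\ZZ^i_n - \hat z^i_n\|^2\le \e^z_n$, and $\sum_i\E|\widehat{\Delta\YY}^i_n - \hat u^i_n|^2\le \e^u_n$.

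With these one-step estimates in hand, I would close the backward induction on $A_n := \max_i\E|\hat\YY^i_n - \bar Y^i_n|^2$. The 1-Lipschitzness of the switching maximum defined in \cref{eq:NN_Yhat} reduces $A_n$ to the sum over modes of $\E|\tilde\YY^i_n - \hat y^i_n|^2$ plus a conditional-expectation term controlled by $A_{n+1}$. Applying the Young inequality with weight $\gamma\Delta t$ to the sum of these two pieces produces
\begin{equation*}
A_n \;\le\; (1+\gamma\Delta t)\,A_{n+1} + \Big(1+\tfrac{1}{\gamma\Delta t}\Big)\e^y_n,
\end{equation*}
and the $(1+1/(\gamma\Delta t))\sim M$ factor explains the $M\e^y_n$ prefactor. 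A discrete Gronwall step with $A_M = 0$ (by exact terminal matching $\hat\YY^i_M = \bar Y^i_M = g^i$) delivers the $Y$-component of \cref{eq:thm}. The $Z$ and $\Delta Y$ contributions to $\mathcal{E}$ in \cref{eq:err_list} pick up an intrinsic $\Delta t$ from the time integral $\int_{t_n}^{t_{n+1}}\!dt$, which combines with the loss-decomposition weights to give $\sum_n(\e^z_n+\e^u_n)$ after the same Gronwall step and routine bookkeeping.

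The main obstacle, in my view, lies in the imported discretization term $C^\e M^{-1/2+\e}$: because switching is enforced only on the sparser grid $\mathfrak{R}$ of cardinality $O(\sqrt M)$, controlling how much the discretely-switched reference scheme lags behind the continuously-reflected BSDE \cref{eq:BSDE} requires an $L^{1+\e}$-type estimate on the increments of $Y^i$ between consecutive points of $\mathfrak{R}$, leveraging Lipschitzness of $C_{i,j}$, $f_i$, $g^i$ under Assumptions \ref{assm_beta_etc}--\ref{assm_fg}. This is where the exponent $1/2$ is lost by an arbitrary $\e$ and where $C^\e$ diverges as $\e\downarrow 0$ through the corresponding Burkholder--Davis--Gundy constants. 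Once that estimate is secured in \cref{sec:discretization}, the neural-network propagation above is essentially structural and adapts the reflected-BSDE template of \cite{PhamML19} to modewise reflection on $\mathfrak{R}$ and jumps driven by $\tilde\NN$.
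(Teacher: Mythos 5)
Your proposal is correct and follows essentially the same route as the paper: the same split into a discretization error (imported as $C^\e M^{-1/2+\e}$ from \cref{sec:discretization}, with your $(\bar Y,\bar Z,\bar U)$ being exactly the scheme \cref{eq:disc_y}--\cref{eq:disc_u}) plus a neural-network propagation error, the same It\^o-isometry decomposition of the loss into a $\theta$-independent residual plus the three weighted one-step errors, and the same Young/Gronwall recursion with terminal matching producing the $M\e^y_n$ prefactor. The only place you compress is the $Z$ and $\Delta Y$ bookkeeping, which in the paper requires a conditional Cauchy--Schwarz step relating $\E\|Z^{\pi,i}_n-\hat z^i_n\|^2$ and $\E|\Delta Y^{\pi,i}_n-\hat u^i_n|^2$ back to the $Y$-error at step $n+1$, but this is routine and consistent with your outline.
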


\begin{remark}
\label{rmk:bigO}
In general, we write $f(x) = O(g(x))$ if there exists some $C > 0$, dependent only on the specific formulation of the problem and not on the size of the discretized grid, such that $f(x) \leq C g(x)$ as $x \to 0$.
\end{remark}

\begin{remark}
\revised{Note that our convergence scheme recovers a similar error to Theorem 4.1 of \cite{PhamML19} which we extended, with the caveat that the discrete approximation of the multidimensional reflected BSDE \cref{eq:BSDE} has size $C^\e M^{-1/2-\e}$ instead of $O(M^{-1})$ for their one-dimensional BSDE.}
\end{remark}

\begin{remark}
The quantity $M\e^y_n + \e^z_n + \e^u_n$ will reappear multiple times as we proceed through the paper. In the future, for convenience we will define
\begin{equation*}
\bm{\e^M_n} := M\e^y_n + \e^z_n + \e^u_n,
\end{equation*}
and at times we will use this simpler quantity in place of the sum of the neural network errors.
\end{remark}

We also wish to evaluate the performance of $ \bm{a^{NN,M}}$ (the switching strategy generated by the machine-learning algorithm) as $M$ increases to infinity by looking at $J(0,x_0, i,  \bm{a^{NN,M}})$ as defined in \cref{eq:J-payoff}. To do this, we introduce some additional conditions which are not necessary for the results given in \cref{thm:main}. 
\begin{theorem}[Convergence of OSJ Switching Strategy]
\label{thm:strat}
Assume that the discrete approximation of $(X_t)_{t \in [0,T]}$ is of strong order 1.0 at minimum.  Then, the mode-wise maximum of the errors between the neural network switching strategy's expected payoff $J(0,x_0, i,  \bm{a^{NN,M}})$ and the true value function $V(0,x_0,i)$ is bounded by
\begin{equation}
\label{eq:thm2}
\begin{split}
\max_{i \in \I} |V(0, x_0, i)  - J(0, x_0, i,  \bm{a^{NN,M}}) | \leq& \revised{C^\e(M^{-1/2 + \e})} + C_2\sum_{n=0}^{M-1}\left[\frac{\bm{\e^M_n}}{M} + \bm{\e^M_n}\right]\\
&+O\left( \frac{1}{M^2}\right)\sqrt{O(1) + C_5\sum_{n=0}^{M-1}\Big[\big(\bm{\e^M_n}/M\big)^{1/2} + \bm{\e^M_n}\Big]}.
\end{split}
\end{equation}
\end{theorem}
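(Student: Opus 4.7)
The plan is to decompose $|V(0,x_0,i) - J(0,x_0,i,\bm{a^{NN,M}})|$ into two pieces via the triangle inequality:
\begin{equation*}
|V(0,x_0,i) - J(0,x_0,i,\bm{a^{NN,M}})| \leq |V(0,x_0,i) - \hat\YY^i_0(x_0)| + |\hat\YY^i_0(x_0) - J(0,x_0,i,\bm{a^{NN,M}})|.
\end{equation*}
The first term is immediately bounded using \cref{thm:main} on the mode-wise max (taking the $n=0$ term from $\mathcal{E}$), contributing exactly the $C^\e M^{-1/2+\e}$ plus $C \sum_n \bm{\e^M_n}$ part of the target bound. The entire work lies in bounding the second term.

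For the second term, I would construct a discrete telescoping identity along the NN-generated trajectory. Let $\alpha^{NN}_n$ denote the modes chosen by $\bm{a^{NN,M}}$ from starting mode $i$, and write
\begin{equation*}
\hat\YY^i_0(x_0) - J(0,x_0,i,\bm{a^{NN,M}}) = \sum_{n=0}^{M-1}\E\bigl[\hat\YY^{\alpha^{NN}_n}_n(X^\pi_n) - \hat\YY^{\alpha^{NN}_{n+1}}_{n+1}(X^\pi_{n+1}) - f_{\alpha^{NN}_n}(t_n,X^\pi_n)\Delta t + C_{\alpha^{NN}_{n-1},\alpha^{NN}_n}(X^\pi_n)\mathbf{1}_{\{\alpha^{NN}_{n-1}\neq\alpha^{NN}_n\}}\bigr]
\end{equation*}
plus a boundary term handled by the terminal condition $\hat\YY^i_M \equiv g^i$ and a path-error term $\E|g^{a_T}(X_T) - g^{a_T}(X^\pi_M)|$, the latter controlled by Lipschitzness of $g$ and the strong order 1.0 assumption. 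Because $\alpha^{NN}_n$ is (up to the jumps onto the switching grid $\mathfrak R$) the arg-max in the definition of $\hat\YY^{\alpha^{NN}_{n-1}}_n$ from \cref{eq:NN_Yhat}, the switching cost contributions match the max expression, so the key one-step residual reduces to the neural-network loss $L^{\alpha^{NN}_n}_n(\theta^{*,\alpha^{NN}_n}_n)$. Each residual is controlled pointwise by $\bm{\e^M_n}/M$ after invoking the auxiliary BSDE representation \cref{eq:loss-bsde} and the argument already developed for \cref{thm:main}, since $\Delta W_n$ and $\Delta\tilde N_n$ have zero conditional mean.

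To extract the $(1/M^2)\sqrt{O(1)+\cdots}$ correction, I would apply Cauchy--Schwarz to the products that remain after taking conditional expectations. The $L^2$ norm of the stochastic integrals against $\Delta W_n$ and $\Delta\tilde N_n$ along the realized NN trajectory introduces a factor $\sqrt{\bm{\e^M_n}/M}$; pairing this with the strong order 1.0 path-approximation error $\E\|X_t - X^\pi_n\|^2 = O(1/M^2)$ on $[t_n,t_{n+1}]$ gives precisely the cross-term $O(M^{-2})\sqrt{O(1) + C_5\sum_n[(\bm{\e^M_n}/M)^{1/2} + \bm{\e^M_n}]}$. The $O(1)$ inside the square root accounts for the uniformly $L^2$-bounded variance of $Z^i$ and $\Delta Y^i$ under \cref{assm_fg} and \cref{assm_beta_etc}, which are needed when the strategy is evaluated using the continuous-time payoff $J$ while the NN objects live on $\pi$.

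The main obstacle, as I see it, is handling the mismatch between the continuous-time admissible strategy $\bm{a^{NN,M}}$ (which acts on $(X_t)$) and the discrete quantities $\tilde\YY^j_n(X^\pi_n)$ used to define $\alpha^{NN}_n$, especially because the $\arg\max$ in the switching rule is non-smooth. To cope with this I would use stability of the max operator together with the Lipschitz bound on $C_{i,j}$ from \cref{assm_cost} and a trajectory-wise comparison lemma showing that near-optimal arg-maxes yield near-optimal payoffs, at the cost of constants absorbed into $C_2, C_5$. The stronger discretization order 1.0 is essential here (hence the separate hypothesis compared to \cref{thm:main}), because strong order $1/2$ would leave an $M^{-1}$ cross-term instead of the $M^{-2}$ one, thereby dominating the final bound.
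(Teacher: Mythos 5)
Your overall architecture matches the paper's: split off $|V(0,x_0,i)-\hat\YY^i_0(x_0)|$ via the triangle inequality and control it with the value-function error from \cref{thm:main}, then compare $\hat\YY^i_0(x_0)$ to $J(0,x_0,i,\bm{a^{NN,M}})$ by telescoping the recursive expression for $J$ against the one-step identity $\hat\YY^{\alpha_{n-1}}_{n}(X^\pi_n)=\tilde\YY^{\alpha_n}_{n}(X^\pi_n)-C_{\alpha_{n-1},\alpha_n}(X^\pi_n)$ supplied by the arg-max rule, with the per-step residual $\E|\tilde\YY^{\alpha_n}_{n}(X^\pi_n)-\hat y^{\alpha_n}_{n}(X^\pi_n)|$ controlled by the trained loss via \cref{eq:loss-bd2}. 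Up to that point you are on the paper's track.

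The gap is in how you account for the term $\sum_{n=0}^{M-1}\E\big[|C_{\alpha_{n-1},\alpha_n}(X_{t_n})-C_{\alpha_{n-1},\alpha_n}(X^\pi_n)|\big]$, which survives the telescoping because $J$ charges switching costs along the continuous path while the networks see $X^\pi_n$. Each summand is $O(M^{-1})$ under strong order $1.0$ and the Lipschitz bound on $C_{i,j}$, so the naive sum over $M$ steps is $O(1)$ and does not vanish. The paper's fix --- the one essential new ingredient of this proof relative to \cref{thm:main} --- is that the summand vanishes unless an actual switch occurs at $t_n$; after $\|\cdot\|_1\le\sqrt{M}\,\|\cdot\|_2$ and Cauchy--Schwarz the sum is therefore controlled by $\sqrt{\E[S_M]}$, where $S_M$ counts the switches of $\bm{a^{NN,M}}$, and \cref{lem:S_M} bounds $\E[S_M]\le O(1)+C_5\sum_n\big[(\bm{\e^M_n}/M)^{1/2}+\bm{\e^M_n}\big]$ using the uniform lower bound $C_{i,j}\ge\epsilon$ from \cref{assm_cost} together with the finiteness of the value function. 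That is exactly where the $\sqrt{O(1)+C_5\sum_n[\cdots]}$ factor in \cref{eq:thm2} comes from. Your proposal instead attributes this square root to the $L^2$ norms of the stochastic integrals against $\Delta W_n$ and $\Delta\tilde N_n$ and to the variances of $Z^i$ and $\Delta Y^i$; those quantities do not generate this term (they are already absorbed into $Error(U,Z)$ and the loss decomposition in the proof of \cref{thm:main}), and nothing in your argument bounds the number of switches. Without such a bound the switching-cost discrepancy stays $O(1)$ and the estimate does not close, so you need to supply (a version of) \cref{lem:S_M}.
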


\begin{remark}
\label{rmk:NN}
From \cite{Hornik89} it is known that $\e^y_n, \e^z_n, \e^u_n \to 0$ as $|\Theta_\delta| \to \infty$. Therefore, for every $M$ there exists a neural network configuration such that all of these errors can be made arbitrarily small. This implies that for each choice of $M$, $\bm{\e^M_n}$ can also be made arbitrarily small. Specifically, for any $\zeta > 0$, there exists a neural network configuration such that $\bm{\e^M_n}$ is on the order of $O(M^{-1 - \zeta})$ for all $n \in \{0,\ldots, M-1\}$. This will ensure the convergence of the results shown in this section.
\end{remark}

\begin{remark}
\revised{We wish to make a small comment on the use of the expectation in our convergence results despite calculating an empirical mean in our algorithm. We choose the number of simulated paths to be on the order of $M^2$ to ensure that the error between the empirical mean and analytical expectation is reasonably small. Furthermore, if exact simulations can be used, this error does not depend on the time discretization. We refer to Theorem 6.2 of \cite{BouchardTouzi2004} for further reading on this distinction, but omit this error elsewhere in our paper for simplicity.}
\end{remark}

The full proofs of \cref{thm:main} and \cref{thm:strat} are provided in \Cref{sec:proof}.


\section{Numerical Examples}
\label{sec:numerics}
\subsection{Optimal Asset Scheduling (Carmona and Ludkovski) \cite{CarLud08}}
\label{sec:ex_CL}
We first numerically evaluate the OSJ algorithm by implementing an optimal switching problem first discussed in \cite{CarLud08}. The problem models a power plant that converts natural gas to electricity, where investors are able to purchase three-month lease contracts. We wish to know how to price these contracts, which is equivalent to calculating the expected maximum profit of the plant over the time horizon under no-arbitrage assumptions.

The price process informing the power plant scheduling decisions has the form of an exponential Ornstein-Uhlenbeck process with jumps given by
\begin{equation*}
dX_t = X_t\left[\kappa(\mu - \log(X_t))dt + \Sigma dW_t + \int_{\R^d} (\exp(e)-1) \NN(de, dt)\mathbf{e_1}\right]
\end{equation*}
where $\kappa, \mu, dW_t \in \R^d$ and $\Sigma \in \R^{d \times d}$. The jumps are driven by $\NN(de, dt)$, a Poisson random measure with intensity measure $\nu(de) = \lambda \mu(e) de$, where $\lambda$ is a constant and $\mu(e)$ is the probability density function of $e$.
The standard basis vector $\mathbf{e_1} = (1, 0, \ldots, 0)^T$ is used to denote that only the first dimension (electricity price) experiences a jump. 

We use a jump-adapted exact scheme to model the process, where jump times are simulated and added to $\pi$ to form a new grid, $\pi'$, with time steps $\{\tau_m\}_{m=0}^{M'}$ where $M' \geq M$. If we define $\tilde X_t = \log(X_t)$, then in between grid points in the jump-adapted scheme, the process evolves as a pure diffusion according to
\begin{equation*}
d \tilde X_t = \left[\kappa(\mu - \tilde X_t) - \frac{1}{2} \text{Tr}(\Sigma \Sigma^T) \right]dt + \Sigma dW_t, t \in (\tau_m, \tau_{m+1}),
\end{equation*}
and the exact solution of this Ornstein-Uhlenbeck process is well-known. The sizes of the randomly-occurring jumps is simulated according to the distribution of the exponential random variable $e$.

\subsubsection{Two-dimensional Example}
\label{sec:CL_2dim}
The price processes for the electricity sold by the plant and the natural gas bought by the plant are given by the following stochastic processes: 
\begin{align*}
    & dP_t = P_t \left\{5(\log(50) - \log P_t) dt + 0.5 dW^1_t + \int_{\R} (\exp(e)-1) \NN(de, dt) \right\}, \\
    & dG_t = G_t \{2(\log(6) -\log G_t) dt + 0.4(0.8 dW^1_t + 0.6 dW^2_t)\},
\end{align*}
where $e$ follow an exponential distribution where the mean jump size is 1/10 = 10\% (and $\mu$ is set acccordingly), and the Poisson random measure has $\lambda = 8$, meaning that the average number of jumps is 8 per year.
These values are chosen to be consistent with those in \cite{CarLud08}.
As previously discussed, the power plant can be leased for three-month intervals. Decisions regarding production capacity can be made twice daily, so there are 180 operational decisions to be made over the lifetime of the contract. As in \cite{CarLud08}, the switching costs are purely dependent on the price of natural gas, as we assume that a certain amount of fuel is burned when altering the operating state of the plant. In this situation, we define 
\begin{equation}
\label{eq:switching_cost}
C_{i,j} = \begin{cases} 0, & i = j,  \forall i, j \in \mathcal{I}, \\ 0.01G_t + 0.001, & i \neq j, \forall i, j \in \mathcal{I}, \end{cases}
\end{equation}
which satisfies the assumptions in \cref{assm_cost}. For our example, the power plant is assumed to begin the contract period in a dormant state, after which the controller may choose whether to run the plant at full capacity (mode 3), half capacity (mode 2), or turn it off temporarily (mode 1). Each mode has an associated running profit, described by
\begin{equation}
\label{eq:psi}
\begin{split}
    &f_1(P_t, G_t) = -1, \\
    & f_2(P_t, G_t) = 0.438(P_t - 7.5G_t) - 1.1, \\
    & f_3(P_t, G_t) = 0.876(P_t - 10G_t) - 1.2,
    \end{split}
\end{equation}
where the total capacity of the plant is 876 MW and the heat rates are 7.5 MMBtu/MWh when running at half capacity and 10 MMBtu/MWh when running at full capacity. 

While this example is only two-dimensional, we implement it to verify the accuracy of our OSJ algorithm on an example that is numerically tractable using previous probabilistic methods. We have made some changes to both the operational setup and the price process evolution in comparison to \cite{CarLud08}, so we cannot directly compare with the results obtained in that paper. However, we are able to compare the results of our implementation with the results of our implementation of their Longstaff-Schwartz probabilistic algorithm, and find that the discrepancy is very low between the methods. The results of our numerical experiments are displayed in \cref{tab:CL_results}. The visualizations of the switching strategies can be found in \cref{fig:cl-2}, and examination shows that the strategies make sense in the context of the problem. For higher natural gas costs and lower electricity prices, the plant will have less incentive to produce large amounts of electricity, since the profits of selling electricity will not fully cover the input costs. 
\begin{table}
\centering
\begin{tabular}{|c|c|c|c|}
\hline
$x_0 = [50,6]$ & OSJ & LS & \revised{Difference} \\
\hline
$V(0, x_0, 1)$ & 0.633266 & 0.63232 & 0.01\%  \\
\hline
$V(0, x_0, 2)$ & 0.69657 & 0.69683 & 0.0004\% \\
\hline
$V(0, x_0, 3)$ &0.621001 & 0.62914 & 1.29\% \\
\hline
\end{tabular}
\caption{This table compares the results of the proposed OSJ algorithm with our implementation of the Longstaff-Schwarz (LS) algorithm presented in \cite{CarLud08} to verify the accuracy of the results. 100,000 paths of $\{X^\pi_n\}_{n=0}^{180}$ were generated. \revised{The neural networks were trained with a learning rate of 0.001 over 20 epochs, and had a runtime of of 13.73 minutes.}}
\label{tab:CL_results}
\end{table}

\begin{figure}
\centering
\begin{subfigure}{0.45\textwidth}
        \includegraphics[width = \textwidth]{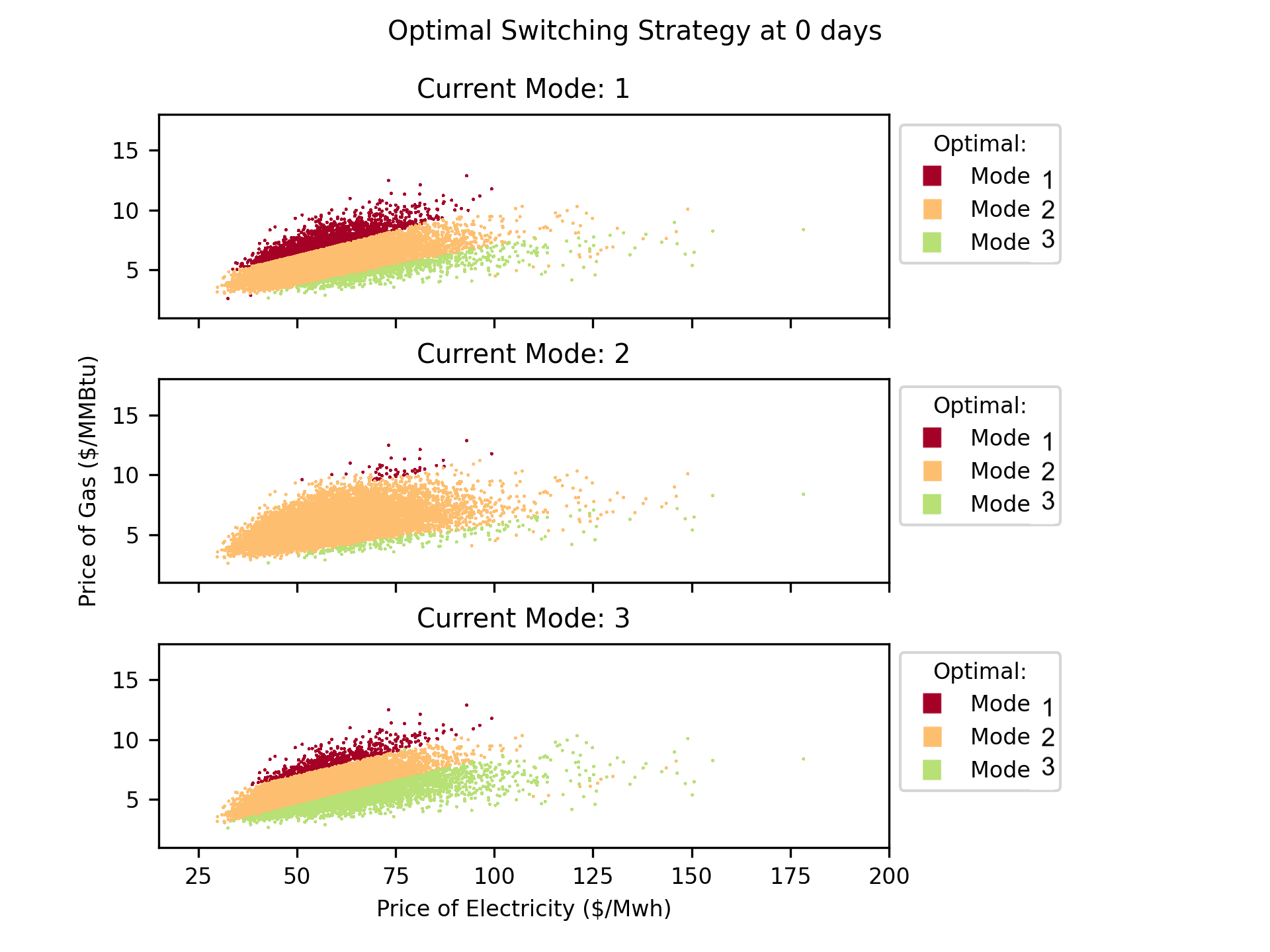}
\end{subfigure}
\begin{subfigure}{0.45\textwidth}
        \includegraphics[width = \textwidth]{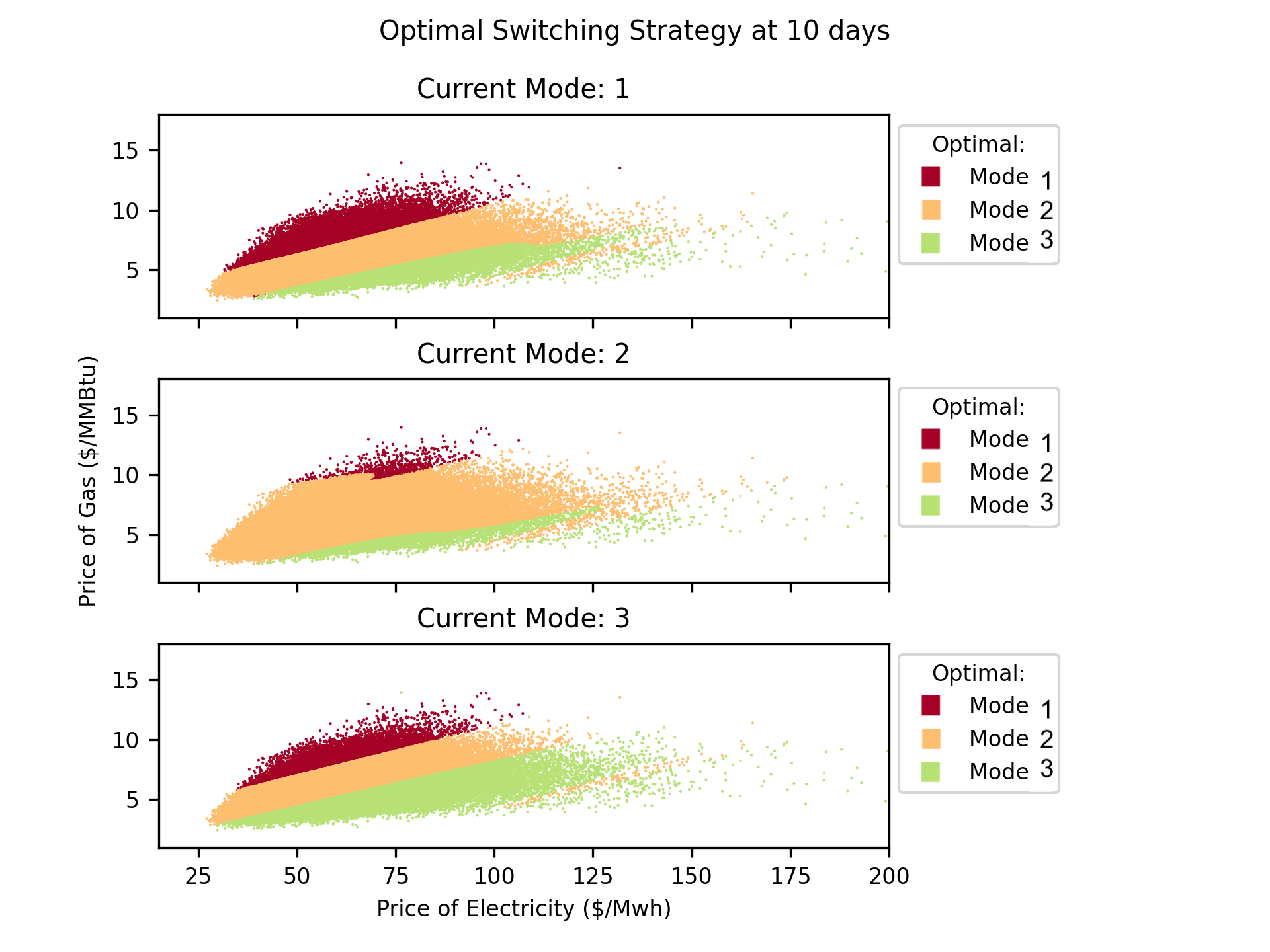}
\end{subfigure}
\begin{subfigure}{0.45\textwidth}
        \includegraphics[width = \textwidth]{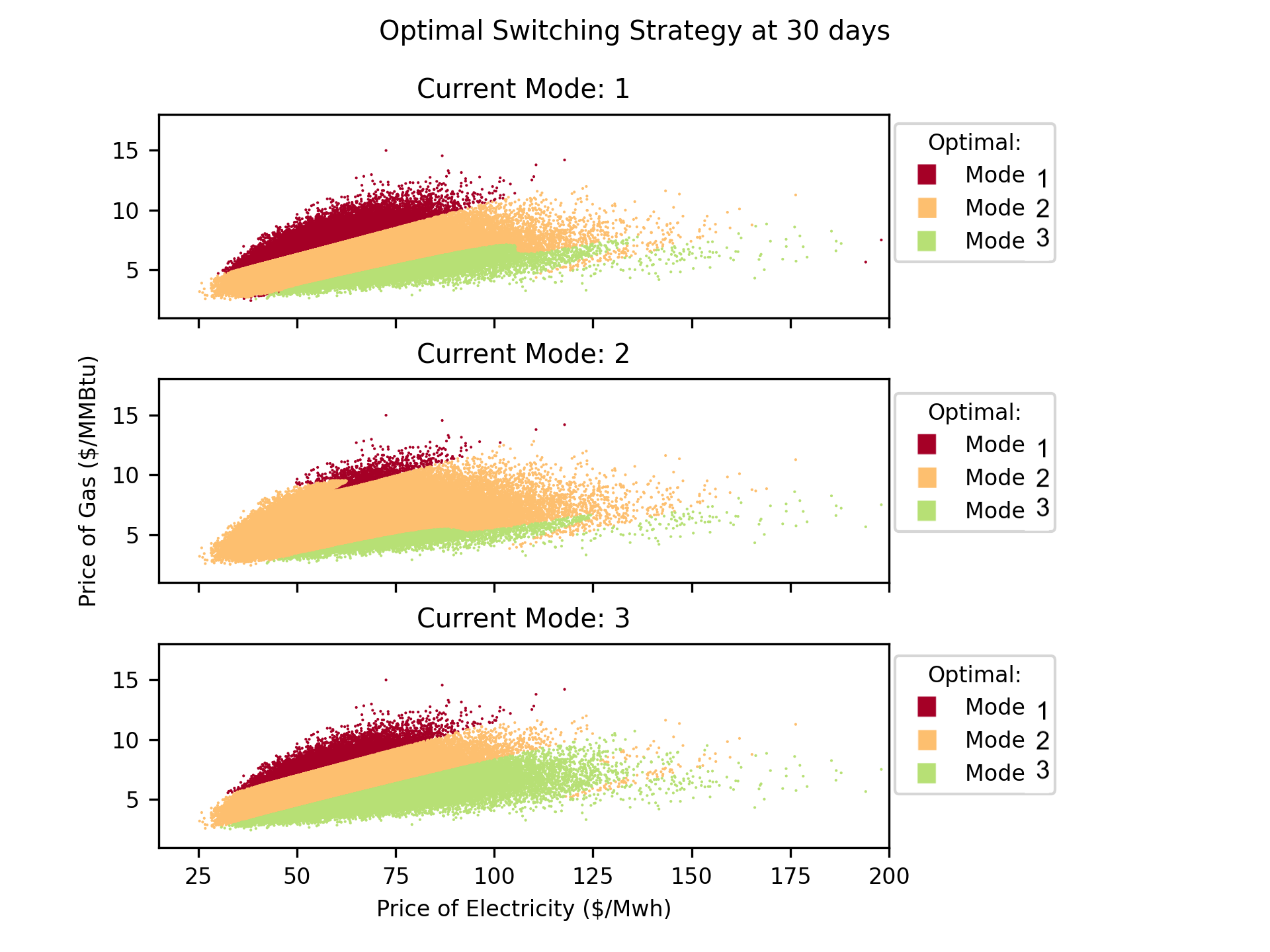}
\end{subfigure}
\begin{subfigure}{0.45\textwidth}
        \includegraphics[width = \textwidth]{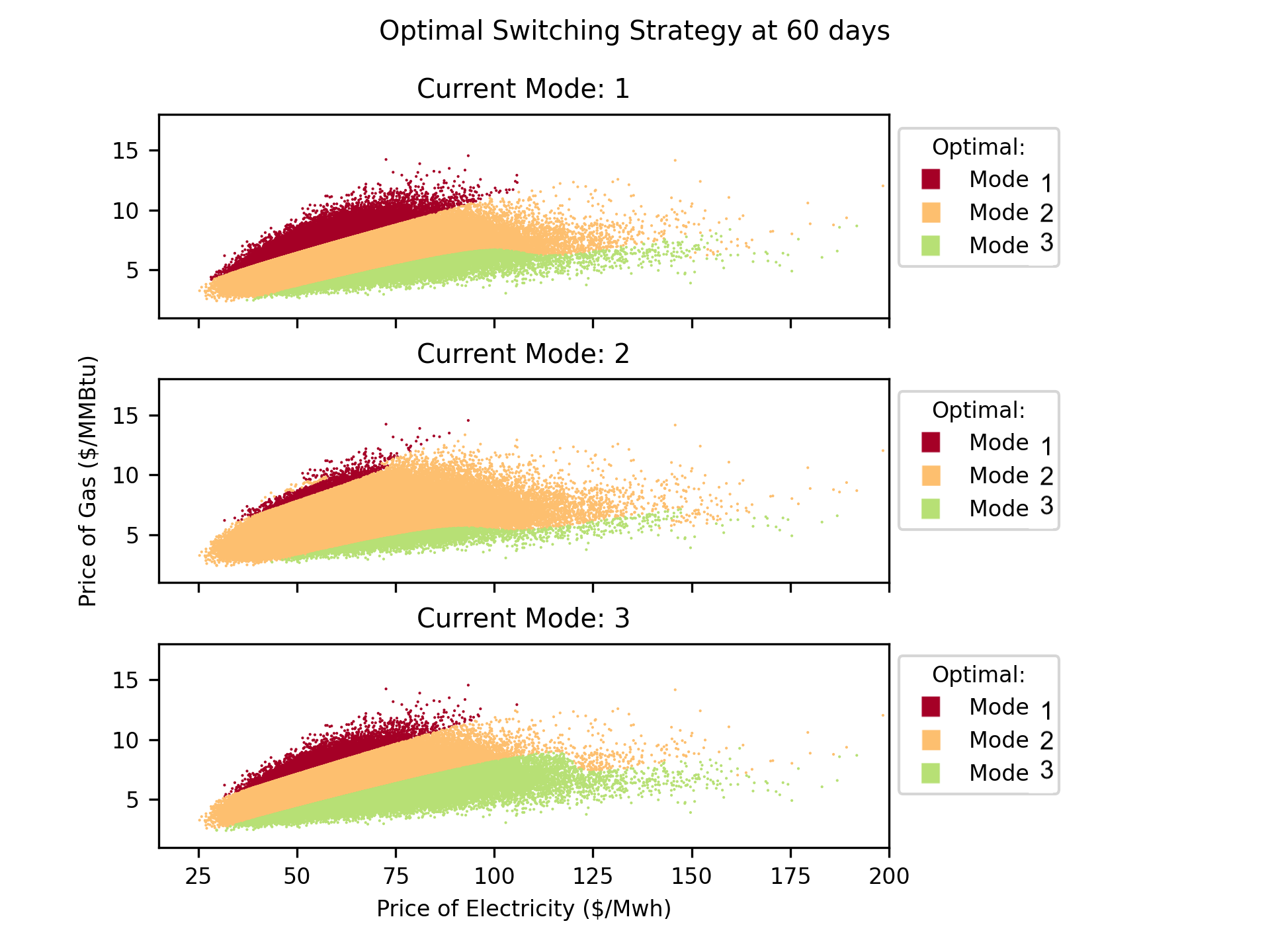}
\end{subfigure}
\begin{subfigure}{0.45\textwidth}
        \includegraphics[width = \textwidth]{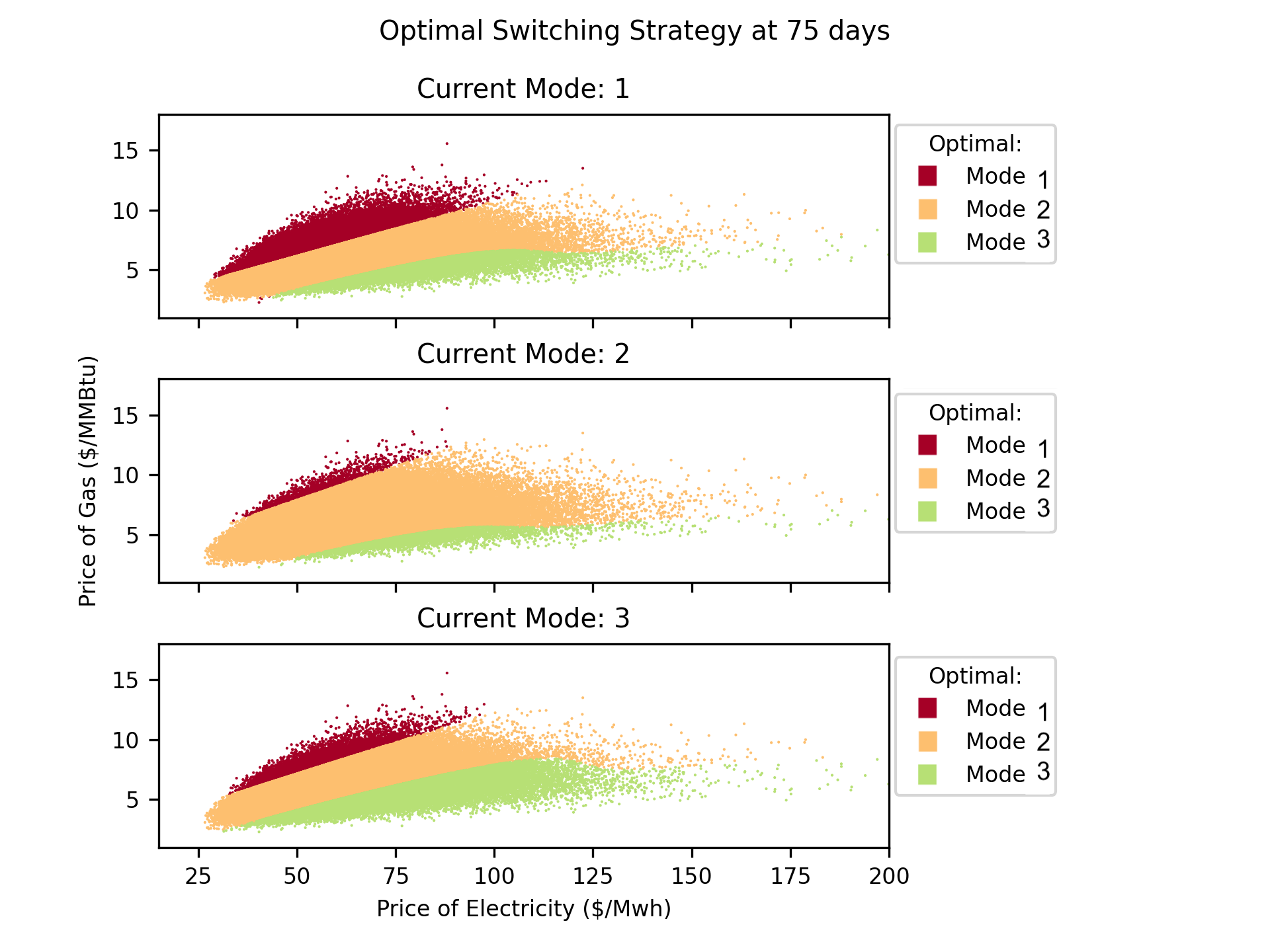}
\end{subfigure}
\begin{subfigure}{0.45\textwidth}
        \includegraphics[width = \textwidth]{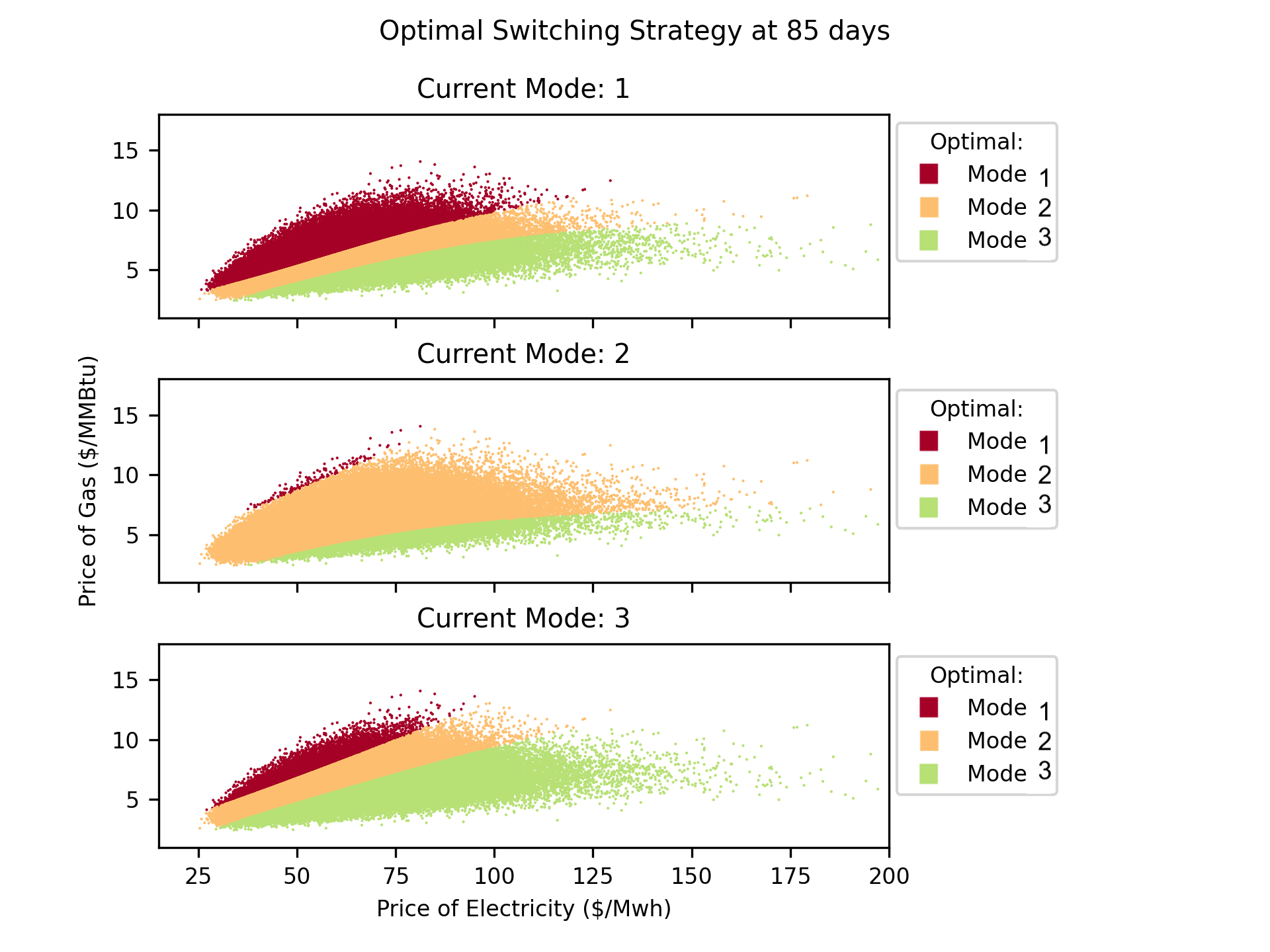}
\end{subfigure}
\begin{subfigure}{0.45\textwidth}
        \includegraphics[width = \textwidth]{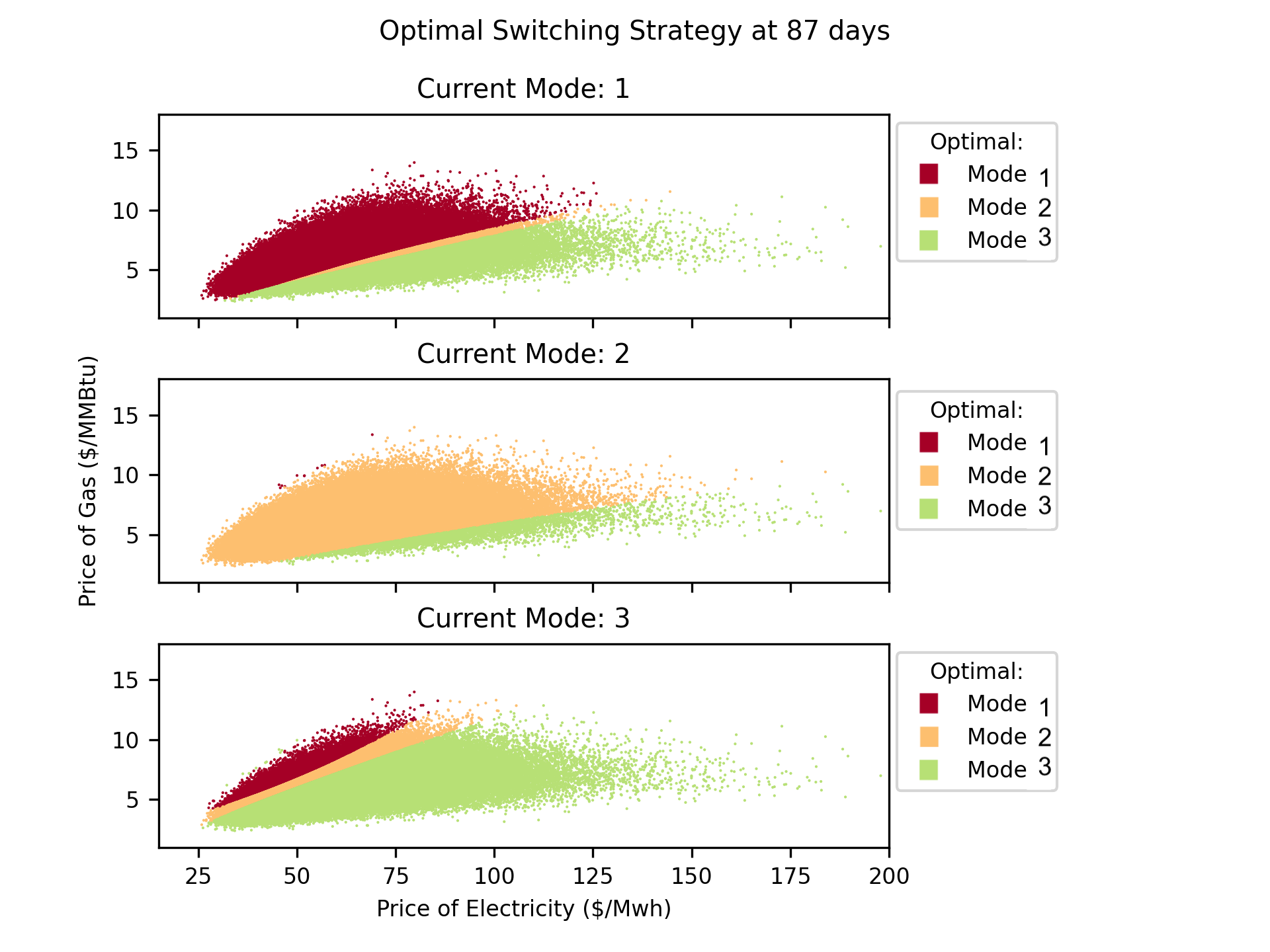}
\end{subfigure}
\begin{subfigure}{0.45\textwidth}
        \includegraphics[width = \textwidth]{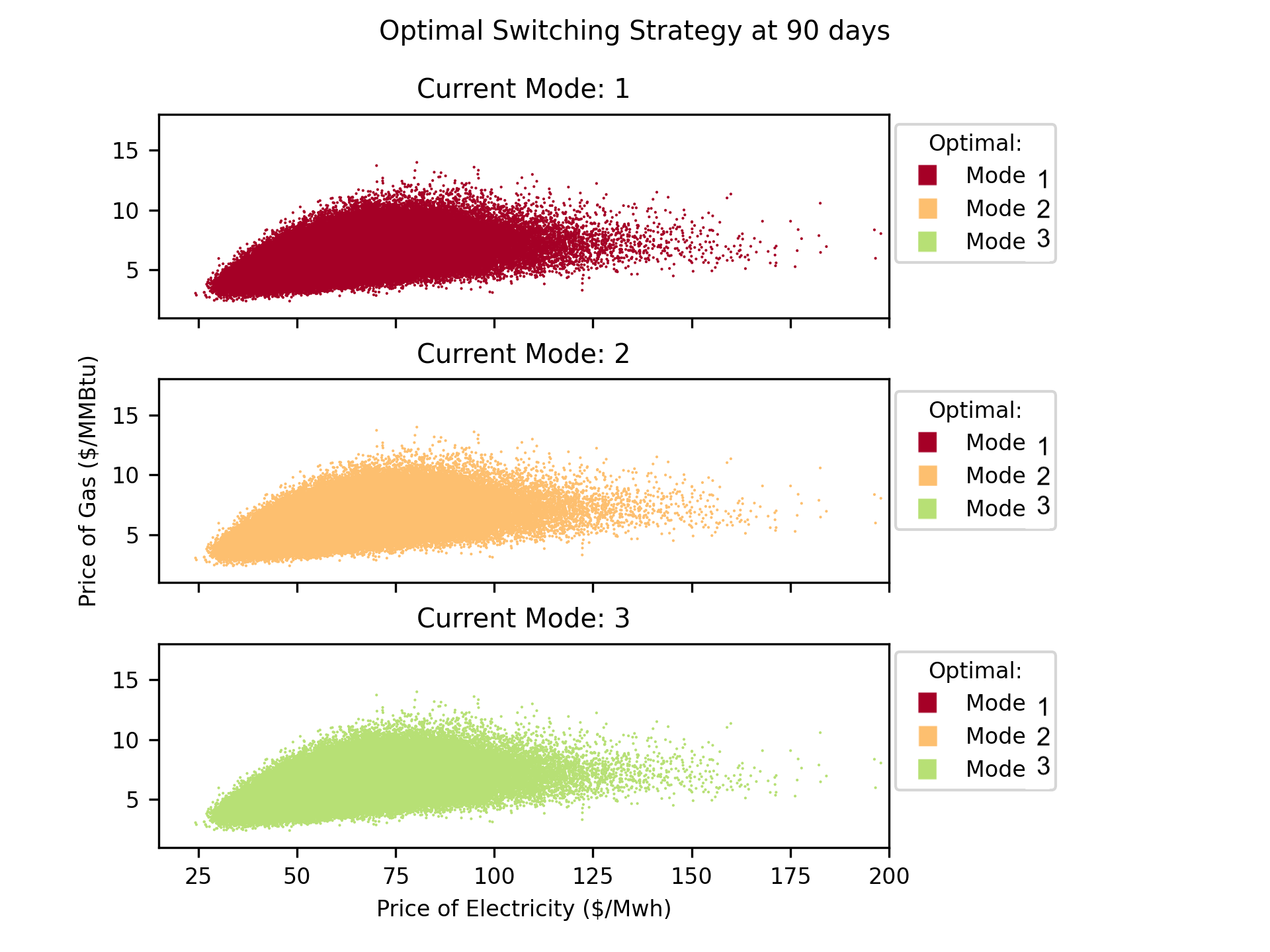}
\end{subfigure}
\caption{Switching strategies starting from mode $i = 1,2,3$ for a selection of time steps, where $N = 180$, corresponding to two electricity production decisions being made per day. The first plot of each subfigure shows the optimal mode to switch to when starting in mode 1 at the given time. The second plot corresponds to starting in mode 2 at the given time, and the third plot corresponds to starting in mode 3 at the given time. Each group of three plots corresponds to a different point in the 90-day scheduling problem. \label{fig:cl-2}}
\end{figure}

\revised{Readers may wonder about the performance for higher value of $\lambda$. If we double $\lambda$ from 8 to 16, we obtain similar error estimates, as detailed in \cref{tab:CL_results_2}, and runtimes are unaffected.}
\begin{table}
\centering
\begin{tabular}{|c|c|c|c|}
\hline
$x_0 = [50,6]$ & OSJ & LS & \revised{Difference} \\ 
\hline
$V(0, x_0, 1)$ & 1.14020& 1.14641 & 0.5\%  \\
\hline
$V(0, x_0, 2)$ & 1.19858 & 1.20930 & 0.8\% \\
\hline
$V(0, x_0, 3)$ & 1.13858 & 1.13399 & 0.4\% \\
\hline
\end{tabular}
\caption{This extends the results of \cref{tab:CL_results} to a scenario where $\lambda = 16$ instead of $\lambda = 8$. Similarly, 100,000 paths of $\{X^\pi_n\}_{n=0}^{180}$ were generated and \revised{the networks were trained with a learning rate of 0.001 over 20 epochs, and had a runtime of 13.96 minutes.}}
\label{tab:CL_results_2}
\end{table}

\subsubsection{Higher-dimensional Performance}
We can also consider a high-dimensional example where optimization is done over the price of electricity $X^0_t = P_t$ (a jump-diffusion process as in the previous example), as well as $F$ possible fuels $(X^1_t,\ldots, X^F_t)$, leading to a problem with $d = 1 + F$. The fuel prices $X^\phi_t, \phi = 1,\ldots,F$ follow an Ornstein--Uhlenbeck-type stochastic evolution given by 
\begin{equation*}
dX^\phi_t= X^\phi_t\{2(\log(6) -\log X^\phi_t) dt + 0.4(0.8 dW^1_t + 0.6 dW^\phi_t)\}.
\end{equation*}
The modes remain the same (mode 1: shut down, mode 2: half capacity, mode 3: full capacity) and the switching costs use the average over all the input fuels. The profit functions are given by $f_i(P_t, \bar X_t)$, defined as in \cref{eq:psi}, where $\bar X_t$ is the geometric mean of the fuel prices $X^1_t,\ldots,X^F_t$ and has the same distribution as $G_t$. \revised{Therefore, analytically this problem can be simplified to the two-dimensional example, and the output can be compared to that of \cref{sec:CL_2dim}. However, the neural network does not ``know" that the problems are equivalent, and simply trains the optimal weights for a higher-dimensional input vector and a larger neural network (recall there are $d+10$ nodes in each layer of the network). So, we can use this example to examine the numerical accuracy and computational performance in higher dimensions. Our results are displayed in \cref{tab:dim_results}, where we demonstrate both consistent accuracy and a linear (rather than exponential) increase in running time in problems of up to 70 dimensions. The increase in running time can also be observed visually in \cref{fig:dim_vis}.}

\begin{table}
\centering
\revised{\begin{tabular}{|c|c|c|}
\hline
Dimension ($d$) & Running time (min) & \revised{Average Difference} \\ 
\hline
2 & 13.73 & 0.433\% \\ 
\hline
10 & 19.52 &  1.004\% \\ 
\hline  
20 & 36.26  &  1.966\% \\ 
\hline
30 & 40.05  & 1.120\% \\ 
\hline
40 & 51.02 & 1.231\% \\ 
\hline
50 & 51.89 & 1.275\% \\ 
\hline
60 & 65.82 & 1.188\% \\  
\hline
70 & 71.95 & 1.617\% \\  
\hline
\end{tabular}}
\caption{This table contains the running times and errors for a series of high-dimensional examples \revised{which are analytically equivalent to the two-dimensional example solvable by the Longstaff-Schwartz algorithm. By average difference, we refer to the average over the three modes of the differences between the value functions produced by the OSJ and LS algorithms.}}
\label{tab:dim_results}
\end{table}

\begin{figure}
\centering
\includegraphics[width = 0.7\textwidth]{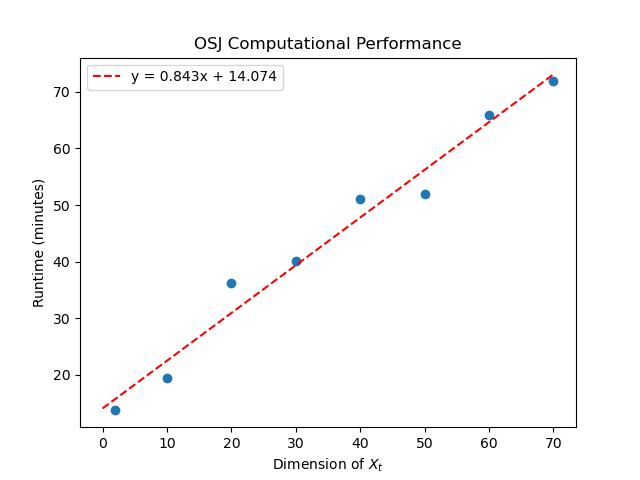}
\caption{A linear regression of the performance of the algorithm as the dimension of the problem increases. \label{fig:dim_vis}}
\end{figure}

\subsection{Optimal Capacity Decisions (Aid, Campi, Langrene, Pham \cite{Aidetal14})}
\label{sec:ex_Aid}
We now consider a high-dimensional example adapted from the paper \cite{Aidetal14}. In their paper, they investigated the question of when and how many power plants of various types to build over a forty-year time horizon. Specifically, they considered an agent who is able to build power plants which rely on a cheaper fuel or a more expensive one. The optimal construction schedule was able to decrease electricity prices while also maximizing profits. We seek to answer a related question on a shorter time horizon, where the investor owns multiple preexisting power generation facilities that produce electricity using different fuels and sources. The operator must determine the optimal configuration of these facilities to bring online at any given time in order to optimize the profits made by the operator of the facilities. However, electricity prices are also stochastic, and changes in plant operations incur switching costs. These changes are made because we would like to investigate strategies on shorter time horizons with more frequent operating decisions, where bringing plants on- and offline in response to demand is not a trivial process. Additionally, the neural network structure of our algorithm will allow us to consider higher-dimensional problems, and so we take advantage of that to model stochastic electricity prices as electricity producers do not always have full control over electricity prices. 

One feature of this example is a stochastic ``realized availability rate" for each facility, which can be explained as an indicator of fluctuations in realized electricity production due to exogenous factors affecting plant efficiency, despite each facility having a constant ``official" output capacity.
Additionally, a carbon dioxide emission penalty is included in the price of each input fuel, where the price of carbon dioxide is also stochastic. 
If we consider $F$ fuel sources (for example, natural gas and oil), the state variables are their prices $(S^\phi_t), \phi=1,\ldots,F$, their stochastic availability $(A^\phi_t), \phi=1,\ldots,F$, the price of carbon dioxide ($S^0_t$), the demand for electricity $(D_t)$, and the price of electricity $(P_t)$. 
Therefore, the total dimension of the state variable is $d = 2F + 3$ where $F$ is the number of different fuels (or more generally, energy sources) being considered. 
The ``modes" we consider are the output levels for each of these fuel sources at the given decision period. We will now proceed to describe the evolution of each of these components of the state variable.

Electricity demand $D_t$ is represented by an Ornstein--Uhlenback process $Z^0_T$ that is shifted using a cosine function $\mathcal{H}$ to incorporate seasonal demand fluctuations, so that
\begin{align*}
&dZ^0_t = \alpha^0 Z^0_t dt + \beta^0 dW^0_t, \\
&D_t = Z^0_t + \mathcal{H}(t).
\end{align*}

Realized availability ($A^\phi_t,\ \phi = 1, \ldots, F$) also fluctuates according to an Ornstein--Uhlenback process, and is transformed by a quantile function $\mathcal{T}:\R \to [0,1]$ to a percentage of the ``official" capacity of each power generation facility, as described by
\begin{align*}
&A^\phi_t = \mathcal{T}(Z^\phi_t), \\
&dZ^\phi_t = \alpha^\phi Z^\phi_t dt + \beta^ \phi dW^\phi_t.
\end{align*}

The remaining inputs to the optimal switching problem are the costs of the fuels and the electricity spot prices. In reality, spot prices are set through electricity markets involving complicated interactions between multiple players, but here we model them as stochastic and independent of the operator's switching decision. \revised{Fuel costs and electricity prices are correlated through the cointegration matrix $\mu$ and covariance matrix $\Sigma$, where $\mu$ is a matrix with rank $r$ such that $1 < r < F + 2$.} The raw fuel prices $S^\phi_t,\ \phi = 0, 1,\ldots, F$ and electricity spot prices $P_t$ are jointly referred to as $S_t = (S^0_t, ..., S^F_t, P_t) \in [0, \infty)^{F+2}$, the dynamics of which are given by
\begin{align*}
dS_t = \mu S_t dt + \text{diag}(S_t) \left(\Sigma dW^S_t + \int_{\R^F} S_t(\exp(e) - 1) \NN(de, dt)\right).
\end{align*}
This price process can also be simulated exactly. The electricity price process (the first dimension of the vector) experience jumps of size $S_t(\exp(e) - 1)$ where $\NN(de, dt)$ is a Poisson random measure with intensity $\lambda$ and $e$ follows an exponential distribution.
The total cost of each fuel is the raw cost of the fuel times its heat rate, $h_\phi S^\phi_t$, plus a carbon dioxide emission charge $h_\phi^0 S^0_t$, and we denote this price as $\tilde S^\phi_t = h^0_\phi S^0_t + h_\phi S^\phi_t$. The specific values of these parameters are given in \cref{tab:aid_params}.
All these factors combine with the current available capacities for each fuel type, given by $K^\phi_t = A^\phi_t \times M^{\phi,i}_t$, where $M^{\phi,i}_t$ is the operating level for the plant that uses fuel $\phi$ in mode $i$. 
\revised{We denote the total production capacity at time $t$ by $\bar K_t = \sum_{\phi=1}^F K^\phi_t$.}

We consider a time horizon $T = 0.25$ years (3 months), where production decisions can be made once per day ($N = 90$). 
The electricity spot prices, along with the cost of altering the capacity of each type of power generation facility and the current demand, determine the total profit made by the owner of the facilities.
The associated running cost is defined as total revenue minus the costs of operating the plants for each technology at the chosen capacities. This can be represented as
\begin{equation*}
\revised{f(t, X_t, K_t) = P_t \min\{D_t, \bar K_t\} + 0.5P_t(\bar K_t - D_t)^+ - 2P_t(D_t - \bar K_t)^+ - \sum_{\phi=1}^F K^\phi_t \tilde S^\phi_t},
\end{equation*}
where $X_t = (D_t, A^1_t,\ldots, A^F_t, S^0_t,\tilde S^1_t,\ldots, \tilde S^F_t)$ and $K_t = (K^1_t,\ldots,K^F_t)$. The basic intuitions behind this profit function are
\begin{itemize}
\item \revised{Electrical demand must be met.}
\item \revised{The power plant can sell electricity in excess of demand at a steep discount, and can buy additional electricity to satisfy demand at a steep premium.}
\item \revised{The total operational capacity is constant across all modes, but the cost and flexibility of production differs depending on the configuration of the three plants.}
\item Once mode $i$ is chosen, the power plant must make $K^\phi_t = A^\phi_t \times M^{\phi,i}_t$ units of electricity using technology $\phi$.
\end{itemize}

To demonstrate the expanded capabilities of our algorithm, we consider three different types of power plants: natural gas ($\phi=1$), coal ($\phi=2$), and nuclear ($\phi=3$) plants. \revised{Switching costs are given by
\begin{equation*}
C_{i,j}(X_t) = \sum_{\phi=1}^F \bm{c}^\phi S^\phi_t \mathbf{1}_{\{M^{\phi,i}_t \neq M^{\phi,j}_t\}}+ \epsilon,
\end{equation*}
where $\bm{c}^\phi$ imposes a scale factor on the cost of fuel $\phi$, and there is an additional small fixed switching cost $\epsilon$ to satisfy \cref{assm_cost}.
The natural gas-based electricity production facility can be ``scaled up" slightly, at the expense of efficiency (conveyed through heat rate), and the ``switching cost" of this change is determined accordingly. 
The coal-based and nuclear production facilities are supplementary facilities that can be turned on or off. 
Nuclear energy is an interesting addition to the problem described in \cite{Aidetal14} as it has lower and more consistent running costs than fossil fuels. In addition, nuclear energy production is not subject to a carbon dioxide emissions charge and the cost of uranium is assumed to be only very weakly correlated with fossil fuel pricing. However, the nuclear plant faces high friction when starting up or shutting down the nuclear reactor, as this is a slow and complicated process if done safely. Therefore, we can model this type of plant and its associated security concerns with a higher ``switching" cost, quantifying the time and energy required to turn the plant on and off. 
Finally, the coal plant also takes time and fuel to turn on and off, which is reflected by ``switching" cost associated with bringing the plant online or taking it offline. 
The possible electricity production configurations (modes) that we consider and their switching costs are listed in \cref{tab:aid}. Notice that capacity levels are the same for all modes, but the mix of energy sources varies from mode to mode. This allows us to examine the optimal electricity production configuration based on our parameter choices, listed in \cref{tab:aid_params}.}

\begin{table}[H]
\centering
\begin{tabular}{ |c|c|c|c|c|c| } 
 \hline
Fuel & $M^{\phi,1}$ & $M^{\phi,2}$ & $M^{\phi,3}$ & $M^{\phi,4}$ & Switching cost \\
 \hline
Natural Gas ($\phi = 1$) & 50 & 60 & 60 & 70 & $0.1 S^1_t$\\ 
Coal ($\phi = 2$) & 10 & 0 & 10 & 0 & $0.1 S^2_t$\\ 
Nuclear ($\phi = 3$) & 10 & 10 & 0 & 0 & $0.5 S^3_t$ \\
 \hline
\end{tabular}
\caption{The modes considered in this example involve natural gas ($\phi=1$), coal ($\phi=2$), and nuclear ($\phi=3$) energy sources. $M^{\phi,i}$ is the baseline production capacity of a plant which uses fuel $\phi$, when operating in mode $i$ for $i = 1,\ldots, 4$.}
\label{tab:aid}
\end{table}

The strategies associated with this model are investigated in Figures \ref{fig:aid1} and \ref{fig:aid2}, which display the relationships between fuel prices and optimal strategies. In each heatmap, fuel prices for one of the three fuels are held constant at their average level, and prices and strategies for the remaining two fuels are visualized. The heatmaps are presented in groups of four to illustrate how the optimal switching strategy is dependent on the current mode at time $t_n$, and each subfigure shows the optimal strategies (given by the colors in the heatmap) for different values of the state variables at a given time and current mode. This allows us to isolate how prices of natural gas, coal, and uranium individually affect optimal strategies and also to examine how switching costs contribute to switching decisions. 

\revised{Overall, our parameters defined coal as the least efficient of the three sources, and is also the most expensive once the carbon dioxide emissions charge is incorporated into the pricing. 
Nuclear energy is the most cost-effective on average, but natural gas is the most efficient when comparing heat rates.
Therefore, modes 1 and 3, which utilize the coal plant, are generally less preferred than the others.
Additionally, the ``cost" of turning on and off the nuclear reactor is quite high, as this cost encompasses the safety considerations associated operating a nuclear reactor.
However, the beneficial qualities of nuclear power are seen to outweigh the drawbacks of the increased switching cost when compared to the lower efficiency of a coal plant.}

\revised{We first discuss \ref{fig:aid1}, which displays the optimal switching strategies for a given current mode at day eighty-five of the ninety-day optimization period.
Note that in general, it is not profitable to incur a loss in revenue by changing the operating mode of the plant, so most of the region remains the color which corresponds to the current mode.
However, if switches do occur, they favor mode 2, where electricity is produced by a combination of nuclear and gas, and occur in regions where either the cost of nuclear is very low or the cost of the other fuels is very high. 
This occurrence is in line with the formulation of the problem, where nuclear and gas power perform better than coal power. 
Empirically, therefore, the switching strategies provided by the algorithm appear reasonable.}

\revised{Furthermore, as $n$ decreases (meaning that there is more time left for the operator to make switching decisions before the end of the operational period at $n=N=90$), switching costs make up a smaller proportion of total costs. 
We look at \ref{fig:aid2}, which shows the optimal switching strategies at day thirty -- much earlier in the process. 
At this point, there is also more uncertainty regarding the evolution of the state process, and this appears to correlate with the algorithm favoring the mode with the best ``average" performance, which is mode 2, where electricity is produced by a combination of natural gas and nuclear power.
Overwhelmingly, the region for which it is optimal to switch to mode 2 increases as $n$ decreases, as shown in \cref{fig:aid2}.}

Note that in \cref{fig:aid2} the algorithm does not prescribe a switch when gas prices are high and when operating fully using gas. This may seem counterintuitive, as higher gas prices in comparison to other fuel prices make gas less attractive at first glance. However, these higher gas prices also result in much higher switching costs for the plant, as the switching costs are also directly proportional to the fuel costs in our model. This introduces an element of reluctance if the profit to be made from switching is small enough in comparison to expensive switching costs. In addition, the mean reversion of the gas prices implies downward pressure on high gas prices (in expectation). It can be seen that as the time horizon becomes longer this effect is weaker, but it still has some impact in \cref{fig:aid2}.

\begin{figure}[H]
\centering
\includegraphics[width = \textwidth]{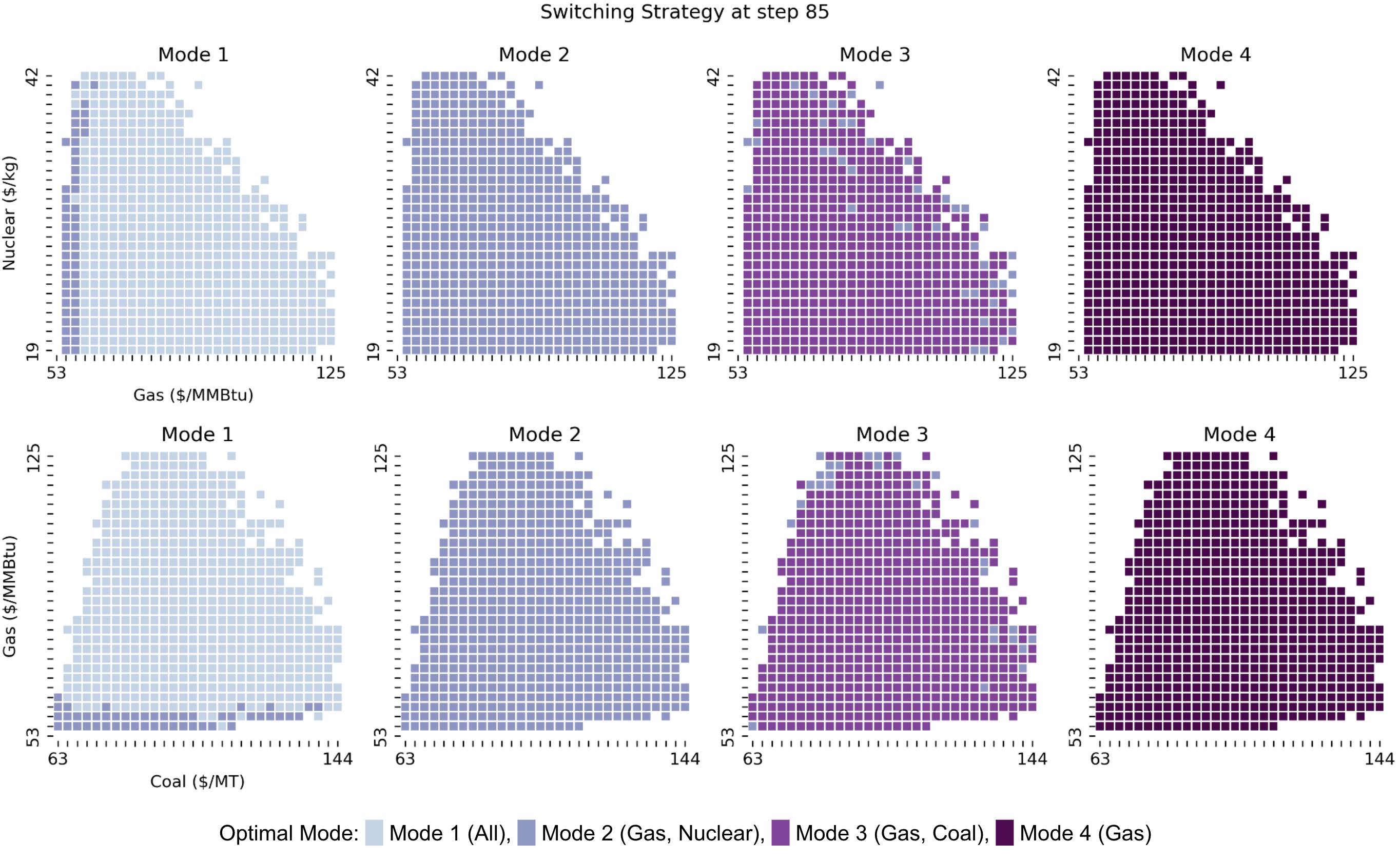}
\caption{Switching strategies at $n=85$, where $n$ is the number of days that has elapsed since the start of the period and the period has length $N = 90$ days. 
 \label{fig:aid1}}
\end{figure}

\begin{figure}[H]
\centering
\includegraphics[width = \textwidth]{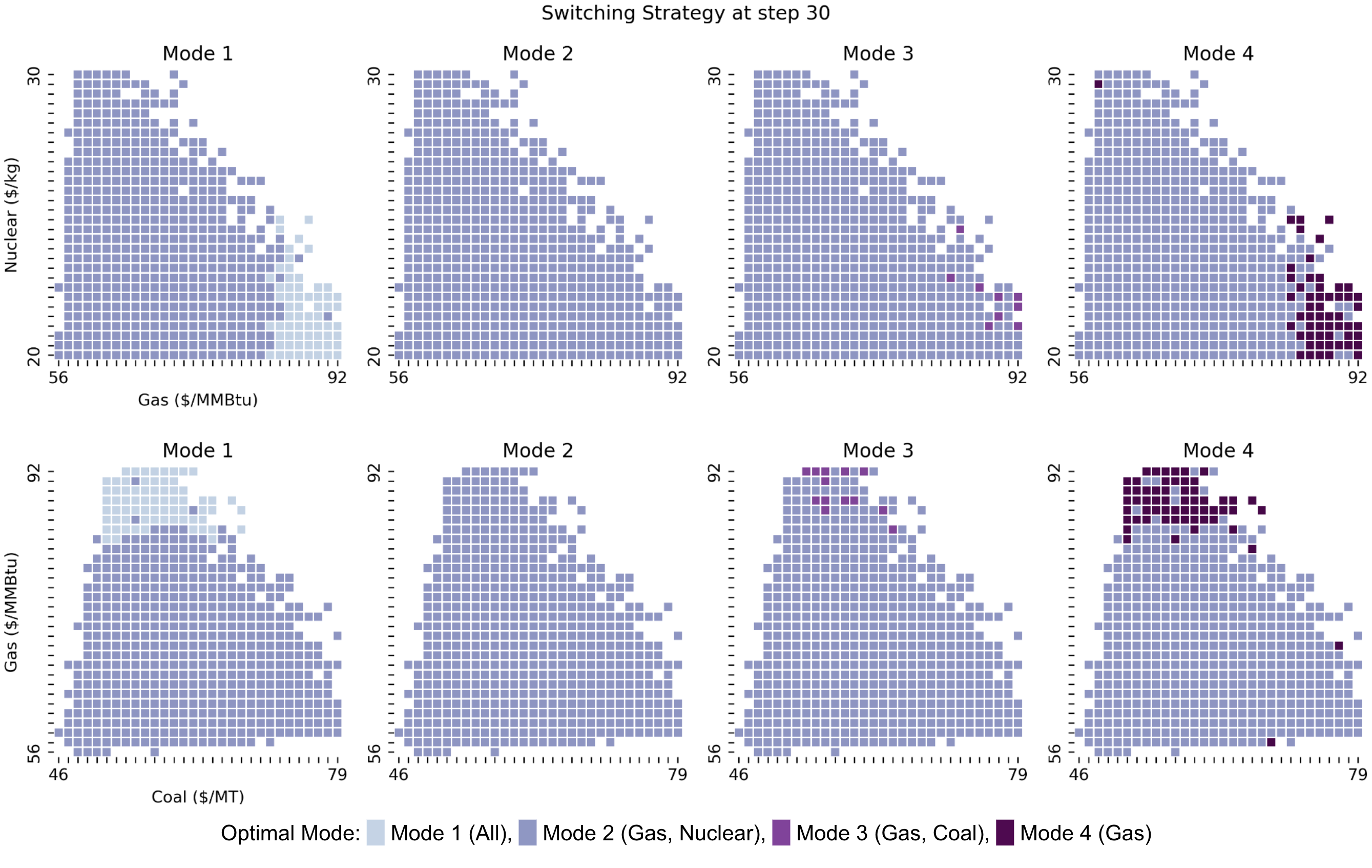}
\caption{Switching strategies at $n=30$, where $n$ is the number of days that has elapsed since the start of the period and the period has length $N = 90$ days. 
 \label{fig:aid2}}
\end{figure}


\section{Convergence Analysis}
\label{sec:proof}
Recall that we wish to prove that the error of the OSJ algorithm will converge to zero as the neural network approximation errors of the architecture converge to zero and as the size of the time steps, $\Delta t = T/M$, decreases to zero. The error in both \cref{thm:main} and \cref{thm:strat} can be split into two parts: a discretization error for the stochastic processes and an algorithm-associated error. Once we have established the discretization error, we will be able to calculate the error of the neural network approximation by working with only discrete processes.

\subsection{Discretization Error}
\label{sec:discretization}
As stated in \cref{rmk:disc}, the result of \cref{thm:main} follows when the discretization scheme used for simulating paths of $X_t$ is of at least strong order 0.5 and the result of \cref{thm:strat} follows when the discretization scheme is of at least strong order 1.0 (the choice of scheme is arbitrary). For consistency, we explicitly define the discretization error as
\begin{equation*}
\left(\E\left[\max_{n = 1, \ldots, N-1} |X_{t_{n+1}} - X^\pi_{n+1}|^2 + \sup_{t \in [t_n, t_{n+1}]} |X_t - X^\pi_n|^2\right]\right)^{1/2}\leq C_X (\Delta t)^\gamma = O(M^{-\gamma})
\end{equation*}
when the discretization scheme is strong order $\gamma$. For our purposes, $\gamma = 0.5$ in \cref{sec:y-proof} and \cref{sec:zu-proof} and $\gamma = 1.0$ in \cref{sec:strat-proof}.

Let us now move on to the error between $Y^i_t = V(t, X_t, i)$ and the discrete approximation of this process, $Y^{\pi,i}_n$, which approximates $Y^i_t$ at time $t_n$ starting in mode $i$. At the terminal time $T$, the terminal condition dictates that
\begin{equation*}
Y^{\pi,i}_M \equiv g^i(X^\pi_M).
\end{equation*}
Earlier values of $Y^{\pi,i}_n$ for $n \in \{0, \ldots, M-1\}$ can now be defined recursively in terms of the discrete approximation of the continuation values at each time step, represented by $\tilde Y^{\pi,i}_n$, following the relationship
 \begin{equation}
\label{eq:switch}
\revised{Y^{\pi,i}_n := \mathbf{1}_{t_n \in \mathfrak{R}} \max\left\{\tilde Y^{\pi,i}_n,\ \max_{j \neq i}\big(- C_{i,j}(X^\pi_n) + \tilde Y^{\pi,j}_n\big)\right\}+ \mathbf{1}_{t_n \notin \mathfrak{R}}\tilde Y^{\pi,i}_n, \forall i \in \I, \forall n \in \{0,\ldots, M-1\}. }
\end{equation}
Recall that $\mathfrak{R}$ is the switching grid, and the grid size is of order $O(M^{-1/2})$.
This representation is driven by the dynamic programming principle.
Therefore, we are able to solve for $Y^{\pi,i}_n$ by comparing the continuations values $\tilde Y^{\pi,i}_n$ at each step $n$. 
In turn, the continuation values at time $t_n$ rely on the approximated value functions $Y^{\pi,i}_{n+1}$ from the previous step. 
The exact formulation of the discrete approximation of these continuation values, as well as discrete approximations of $Z^i_t$ and $\Delta Y^i_t$, follows the method laid out in \cite{BouchardElieApprox}. We set
\begin{align}
\tilde Y^{\pi,i}_n  & := \E[Y^{\pi,i}_{n+1}|\F_n] +  f(t_n, X^\pi_n)\Delta t, \label{eq:disc_y}\\
Z^{\pi, i}_n &:= \frac{1}{\Delta t} \E[Y^{\pi,i}_{n+1} \Delta W_n|\F_n], \label{eq:disc_z}\\
\Delta Y^{\pi,i}_n &:=  \frac{1}{\lambda \Delta t} \E[Y^{\pi,i}_{n+1}\Delta \tilde N_n|\F_n]. \label{eq:disc_u}
\end{align}
\revised{The strong order $\gamma$ approximation of $X_t$ where $\gamma \geq 0.5$ and the conditions in \cref{assm_beta_etc} ensure that the discrete-time approximation scheme described in \cref{eq:disc_y}-\cref{eq:disc_u} is of strong order at least 0.5, as stated in Theorem 2.1 of \cite{BouchardElieApprox}, when considering the un-reflected BSDEs with jumps. Theorem 5.4 of the reference \cite{ChasElieKhar12} presents the convergence rate for a similar discretization scheme for a multidimensional reflected BSDE without jumps. By following a similar path to these two papers, one which handles a BSDE with jumps and one which handles a BSDE with reflections which arise from switching, we can obtain a convergence rate for the above discretization scheme. For all modes $i$ in $\I$ and for $\e$ arbitrarily small, we have
\begin{equation}
\label{eq:disc-approx-err}
\begin{split}
\max_{n \in 0,1,\ldots,M} \E[\tilde Y^i_{t_n} - \tilde Y^{\pi,i}_n|^2 + |Y^i_{t_n} - Y^{\pi,i}_n|^2] &\leq C^\e M^{-1 + \e}, \\
\E\left[\sum_{n = 0}^{M-1} \int_{t_n}^{t_{n+1}} |Z^i_t - Z^{\pi,i}_n|^2 dt \right] &\leq C^\e M^{-1/2 + \e},\\
\E\left[\sum_{n = 0}^{M-1} \int_{t_n}^{t_{n+1}} \left|\int_{\R^d} \Delta Y^i_t(e) \nu(de) - \Delta Y^{\pi,i}_n\right|^2 ds \right] & \leq C^\e M^{-1/2 + \e}.
\end{split}
\end{equation}}
Therefore, in following sections, we will focus on the error between the discrete approximations $(Y^{\pi,i}_n, Z^{\pi,i}_n, \Delta Y^{\pi,i}_n)$ and the neural network approximations $(\hat \YY^i_n(X^\pi_n),\hat \ZZ^i_n(X^\pi_n),\widehat{\Delta Y}^i_n(X^\pi_n))$.

\begin{remark}
Each $\tilde Y^{\pi,i}_n$ can be represented as a conditional expectation with respect to $\F_n$, so due to the Markovian nature of the problem, it can be expressed as a function of $X^{\pi,i}_n$. We abuse notation and denote the value function by $\tilde Y^{\pi,i}_n(X^{\pi,i}_n)$. The same can be done to represent $Y^{\pi,i}_n$ as $Y^{\pi,i}_n(X^\pi_n)$, $Z^{\pi,i}_n$ as $Z^{\pi,i}_n(X^\pi_n)$, and $\Delta Y^{\pi,i}_n$ as $\Delta Y^{\pi,i}_n(X^\pi_n)$. 
\end{remark}

\subsection{Proof of \cref{thm:main}}
\subsubsection{Value Function Learning Error}
\label{sec:y-proof}
We now want to bound the error between the function $\hat \YY^i_n(X^\pi_n)$ learned by the OSJ algorithm and the discrete value function approximation $Y^{\pi,i}_n(X^\pi_n)$, for any given starting mode $i$. Using $|\max\{a,b\} - \max\{c,d\}| \leq \max\{|a - c|, |b - d|\}$ we can rewrite 
\begin{align*}
|Y^{\pi,i}_n(X^\pi_n) - \hat \YY^i_n(X^\pi_n)| =& |\max\{\tilde Y^{\pi,i}_n(X^\pi_n), \max_{j \neq i}(-C_{i,j}(X^\pi_n) + \tilde Y_n^{\pi,j}(X^\pi_n))\} \\
&- \max\{\tilde \YY^i_n(X^\pi_n), \max_{j\neq i}(-C_{i,j}(X^\pi_n)+\tilde \YY^j_n(X^\pi_n))\}|\\
\leq& \max\{|\tilde Y^{\pi,i}_n(X^\pi_n) - \tilde \YY^i_n(X^\pi_n)|, \\
&\hspace*{1cm}|\max_{j \neq i}(-C_{i,j}(X^\pi_n) + \tilde Y_n^{\pi,j}(X^\pi_n) + C_{i,j}(X^\pi_n) - \tilde \YY^j_n(X^\pi_n))|\} \\
=&  \max\{|\tilde Y^{\pi,i}_n(X^\pi_n) - \tilde \YY^i_n(X^\pi_n)|, |\max_{j \neq i} (\tilde Y_n^{\pi,j}(X^\pi_n) - \tilde \YY^j_n(X^\pi_n))|\} \\
\leq &\max_{j \in \I} |\tilde Y^{\pi,j}_n(X^\pi_n) - \tilde \YY^j_n(X^\pi_n)|.
\end{align*}
Therefore, we conclude that
\begin{equation}
\label{eq:key}
\E\big[ \max_{i \in \I} | Y^{\pi,i}_n(X^\pi_n) - \hat \YY^i_n(X^\pi_n)|^2 \big]\leq \E\big[ \max_{i \in \I} |\tilde Y^{\pi,i}_n(X^\pi_n) - \tilde \YY^i_n(X^\pi_n)|^2 \big].
\end{equation}

In light of this, let us focus on the mode-wise maximum of the difference in the continuation values $\tilde \YY^i_n$ and $\tilde Y^{\pi,i}_n$ by using \cref{eq:hat_y} and \cref{eq:disc_y} to find that
\begin{equation*}
\tilde Y^{\pi,i}_n(X^\pi_n) - \hat y^i_n(X^\pi_n) =\E\big[Y^{\pi, i}_{n+1}(X^\pi_{n+1}) - \hat \YY^i_{n+1}(X^\pi_{n+1}) | \F_n\big]. 
\end{equation*}
Squaring and taking expectation over the mode-wise maximum, then applying Jensen's inequality and tower property,
\begin{equation}
\label{eq:upper-y}
\begin{split}
\E\big[\max_{i \in \I} |\tilde Y^{\pi,i}_n(X^\pi_n) - \hat y^i_n(X^\pi_n)|^2\big] \leq \E\big[\max_{i \in \I}|Y^{\pi, i}_{n+1}(X^\pi_{n+1}) - \hat \YY^i_{n+1}(X^\pi_{n+1})|^2\big]. 
\end{split}
\end{equation}
We can get a lower bound on the left-hand side by using $\max_{i\in\I} | a_i - b_i| \geq |\max_{i\in\I} |a_i| - \max_{i\in\I} |b_i||$, such that 
\begin{equation*}
\E\left[\max_{i \in \I} \big|\tilde Y^{\pi, i}_n(X^\pi_n)- \hat y^i_n(X^\pi_n)|^2\right] \geq \E\Big[\big|\max_{i \in \I} |\tilde Y^{\pi, i}_n(X^\pi_n)- \tilde\YY^i_n(X^\pi_n)| - \max_{i \in \I} |\tilde\YY^j_n(X^\pi_n) - \hat y^j_n(X^\pi_n)|\big|^2\Big].
\end{equation*}
Now, we apply Young's inequality in the form $(a - b)^2 \geq (1 - \Delta t)a^2 - \frac{1}{\Delta t}b^2$ to the right-hand side so that a lower bound is obtained of the form
\begin{equation*}
\begin{split}
&\E\big[ \max_{i \in \I} |\tilde Y^{\pi, i}_n(X^\pi_n)- \hat y^i_n(X^\pi_n)|^2\big] \\
\geq  &(1 - \Delta t) \E\big[ \max_{i \in \I} |\tilde Y^{\pi, i}_n(X^\pi_n) - \tilde \YY^i_n(X^\pi_n)|^2 \big]- \frac{1}{\Delta t}  \E\big[\max_{i \in \I} |\tilde \YY^i_n(X^\pi_n) - \hat y^i_n(X^\pi_n)|^2\big]\\
\geq &(1 - \Delta t) \E\big[ \max_{i \in \I} |\tilde Y^{\pi, i}_n(X^\pi_n) - \tilde \YY^i_n(X^\pi_n)|^2 \big]- \frac{1}{\Delta t}  \E\Big[\sum_{i \in \I} |\tilde \YY^i_n(X^\pi_n) - \hat y^i_n(X^\pi_n)|^2\Big]
\end{split}
\end{equation*}
By combining this with \cref{eq:upper-y} we can write
\begin{equation*}
\begin{split}
(1 - \Delta t) \E\big[ \max_{i \in \I} |\tilde Y^{\pi, i}_n(X^\pi_n)- \tilde \YY^i_n(X^\pi_n)|^2 \big] \leq & \E\big[ \max_{i \in \I} |\tilde Y^{\pi, i}_n(X^\pi_n)- \hat y^i_n(X^\pi_n)|^2\big] \\
&+ \frac{1}{\Delta t} \E\Big[\sum_{i \in \I} |\tilde \YY^i_n(X^\pi_n) - \hat y^i_n(X^\pi_n)|^2\Big]  \\
\leq &\E\big[ \max_{i \in \I} |Y^{\pi, i}_{n+1}(X^\pi_{n+1}) - \hat \YY^i_{n+1}(X^\pi_{n+1})|^2\big] \\
& + \frac{1}{\Delta t}\sum_{i \in \I} \E |\tilde \YY^i_n(X^\pi_n) - \hat y^i_n(X^\pi_n)|^2.
\end{split}
\end{equation*}
For $\Delta t$ sufficiently small (less than 1), we have
\begin{equation}
\begin{split}
\label{eq:cont-bound}
\E\big[ \max_{i \in \I}|\tilde Y^{\pi, i}_n(X^\pi_n)- \tilde \YY^i_n(X^\pi_n)|^2\big] \leq& (1 + C_1\Delta t) \E\big[ \max_{i \in \I} |Y^{\pi, i}_{n+1}(X^\pi_{n+1}) - \hat \YY^i_{n+1}(X^\pi_{n+1})|^2 \big ]\\
&+  C_1M\sum_{i \in \I} \E |\tilde \YY^i_n(X^\pi_n) - \hat y^i_n(X^\pi_n)|^2,
\end{split}
\end{equation}
where $C_1$ does not grow as $M$ increases and is independent of structure of the neural networks. 

At this point, we have established an upper bound on the error between the discretized continuation values $\tilde Y^{\pi, i}_n$ and the neural-network-generated continuation values $\tilde \YY^i_n(X^\pi_n)$. 
This upper bound is in terms of $\E|\tilde \YY^i_n(X^\pi_n) - \hat y^i_n(X^\pi_n)|^2$ and $\E|Y^{\pi, i}_{n+1}(X^\pi_{n+1})-\hat \YY^i_{n+1}(X^\pi_{n+1})|^2$. 
However, \cref{thm:main} states the convergence error in terms of the neural network approximation errors $\e^y_n$, $\e^z_n,$ and $\e^u_n$.
Therefore, our next course of action is to determine a bound on $\E|\tilde \YY^i_n(X^\pi_n) - \hat y^i_n(X^\pi_n)|^2$ in terms of these neural network approximation errors.

To do this, we must analyze the loss function described in \cref{eq:loss}. Replace $\hat \YY^i_{n+1}(X^\pi_{n+1})$ in \cref{eq:loss} with its corresponding BSDE representation \cref{eq:loss-bsde} and use the relations \cref{eq:z_hat} and \cref{eq:u_hat} to write
\begin{align*}
L^i_n(\theta) = \E\Bigg|& \hat y^i_n(X^\pi_n) - \YY^i_n(X^\pi_n, \theta) + \big( f_i(t_n, X^\pi_n) -  f_i(t_n, X^\pi_n)\big) \Delta t\\
&+ \int_{t_n}^{t_{n+1}} \int_{\R^d} u^i_s(e) \tilde \NN(de, ds)  -  \Delta \YY^i_n(X^\pi_n, \theta)\Delta \tilde N_n  + \int_{t_n}^{t_{n+1}} (z^i_s)^T d W_s - \ZZ^i_n(X^\pi_n, \theta)^T\Delta W_n \Bigg|^2\\
= \E\big|& \hat y^i_n(X^\pi_n) - \YY^i_n(X^\pi_n, \theta) +\big(\hat u^i_n(X^\pi_n) -  \Delta \YY^i_n(X^\pi_n, \theta)\big)\Delta \tilde N_n  + \big(\hat z^i_n(X^\pi_n) - \ZZ^i_n(X^\pi_n, \theta)\big)^T\Delta W_n \big|^2\\
+\E&\left[\int_{t_n}^{t_{n+1}} \int_{\R^d} |u^i_s - \hat u^i_n(X^\pi_n)|^2 \NN(de, ds)\right] + \E\left[\int_{t_n}^{t_{n+1}} ||z^i_s - \hat z^i_n(X^\pi_n)||^2 ds\right]\\
=\bar L^i_n&(\theta) + Error(U, Z),
\end{align*}
where we used It\^o isometry \revised{and the definitions of $\hat u^i_n$ and $\hat z^i_n$, given in \cref{eq:hat_u} and \cref{eq:hat_z} respectively,} to split up the expectation.
We introduce the notation
\begin{equation*}
\bar L^i_n(\theta) := \E\big| \hat y^i_n(X^\pi_n) - \YY^i_n(X^\pi_n, \theta) +\big(\hat u^i_n(X^\pi_n) -  \Delta \YY^i_n(X^\pi_n, \theta)\big)\Delta \tilde N_n  + \big(\hat z^i_n(X^\pi_n) - \ZZ^i_n(X^\pi_n, \theta)\big)^T\Delta W_n \big|^2
\end{equation*} 
and 
\begin{equation*}
 Error(U, Z) := \E\left[\int_{t_n}^{t_{n+1}} \int_{\R^d} |u^i_s - \hat u^i_n(X^\pi_n)|^2 \NN(de, ds)\right] + \E\left[\int_{t_n}^{t_{n+1}} ||z^i_s - \hat z^i_n(X^\pi_n)||^2 ds\right].
\end{equation*}
Note that $Error(U,Z)$ is independent of our choice of $\theta$. In addition, this error scales with $\Delta t$ as stated in \cite{BouchardElieApprox}, since $\hat z^i_n$ and $\hat u^i_n$ are the $L^2$ projections of $z^i_s$ and $u^i_s$ on $[t_n, t_{n+1})$.
For the near future, we will focus on $\bar L^i_n(\theta)$. 
By It\^o isometry and $\E[\Delta W_n] = 0$, $\E[\Delta \tilde N_n] = 0$, we obtain 
\begin{equation*}
\begin{split}
\bar L^i_n(\theta) =& \E|\hat y^i_n(X_n) - \YY^i_n(X_n, \theta)|^2 \\
& + \Delta t \E||\hat z^i_n(X_n) - \ZZ^i_n(X_n, \theta)||^2 + \lambda \Delta t \E|\hat u^i_n(X_n) - \Delta \YY^i_n(X_n, \theta)|^2.
\end{split}
\end{equation*}
Extending this to the sum of the loss functions over all possible modes and choosing $\theta  = (\theta_1, \theta_2, \theta_3) = \theta^{*,i}_n \in \argmin_{\theta \in \Theta} L^i_n(\theta)$, we conclude that
\begin{equation}
\begin{split}
\label{eq:loss-bd1}
\sum_{i \in \I} \bar L^i_n(\theta^{*,i}_n) = & \sum_{i \in \I} \E|\hat y^i_n(X^\pi_n) - \tilde \YY^i_n(X^\pi_n)|^2 \\
&+\sum_{i \in \I}\Delta t \big(\E||\hat z^i_n(X^\pi_n) - \hat \ZZ^i_n(X^\pi_n)||^2 + \lambda \E|\hat u^i_n(X^\pi_n) - \widehat{\Delta \YY}^i_n(X^\pi_n)|^2\big)\\
 \leq& \sum_{i \in \I}\big(\inf_{\theta_1} \E|\hat y^i_n(X^\pi_n) - \YY^i_n(X^\pi_n, \theta_1)|^2 + \Delta t  \inf_{\theta_2} \E||\hat z^i_n(X^\pi_n) - \ZZ^i_n(X^\pi_n, \theta_2)||^2\big)\\ 
 &+\sum_{i \in \I}\lambda \Delta t \big(\inf_{\theta_3} \E|\hat u^i_n(X^\pi_n)  -  \Delta \YY^i_n(X^\pi_n, \theta_3)|^2\big)\\
=&\e^y_n + \Delta t(\e^z_n + \lambda \e^u_n).
\end{split}
\end{equation}
We note that $\theta^{*,i}_n$ must minimize both $L^i_n(\theta)$ and $\bar L^i_n(\theta)$ because $Error(U,Z)$ does not depend on $\theta$.  
This facilitates the above inequality, where the final line follows from the definition of the neural network errors given in \cref{eq:eps}.
From \cref{eq:loss-bd1}, we also get the looser bound
\begin{equation}
\label{eq:loss-bd2}
\sum_{i \in \I}  \E|\hat y^i_n(X^\pi_n) - \tilde \YY^i_n(X^\pi_n)|^2 \leq C_2(\e^y_n + \Delta t(\e^z_n + \e^u_n)) = C_2 \bm{\e^M_n}/M,
\end{equation}
where $C_2 = \max\{\lambda, 1\}$.
Applying this to \cref{eq:cont-bound} and recalling \cref{eq:key} yields 
\begin{equation}
\label{eq:pre-err}
\begin{split}
\E\big[ \max_{i \in \I}|Y^{\pi, i}_n(X^\pi_n)- \hat \YY^i_n(X^\pi_n)|^2\big] \leq&  (1 + C_1\Delta t) \E\big[ \max_{i \in \I} |Y^{\pi, i}_{n+1}(X^\pi_{n+1}) - \hat \YY^i_{n+1}(X^\pi_{n+1})|^2 \big ] \\
&+ C_1C_2M(\bm{\e^M_n}/M).
\end{split}
\end{equation}
This setup allows us to perform induction on the inequality, continuing until the right-hand side is expressed in terms of $\E\big[ \max_{i \in \I} |Y^{\pi, i}_M(X^\pi_M) - \hat \YY^i_M(X^\pi_M)|^2 \big ] =0$ and a sum of neural network errors at time steps from $n$ to $M$. Then, we can conclude the maximum error over the $M$ time steps is bounded as
\begin{equation}
\label{eq:disc-y-err}
\max_{n=0,1,\ldots,M-1}  \E\Big[ \max_{i \in \I} |Y^{\pi,i}_n(X^\pi_n) - \hat \YY^i_n(X^\pi_n)|^2\Big]\leq C_3\sum_{n=0}^{M-1} \bm{\e^M_n}. 
\end{equation}
where $C_3$ encompasses the coefficients $C_1$ and $C_2$, as well as coefficients arising from the inductive step. This aggregate coefficient, $C_3$, is independent of $\Delta t$ and the neural network structure. 
Recall we initialized $\hat \YY^i_M(\cdot) = g^i(\cdot)$, so $ \E[\max_{i \in \I}|Y^{\pi,i}_M(X^\pi_M) - \hat \YY^i_M(X^\pi_M)|^2] = 0$. However, the error in \cref{eq:disc-y-err} is only the error between the discrete approximation of the value function and the trained neural network approximation of the value function. The true error involves the continuous value function $Y^i_t$ when starting in mode $i$ at time $t$, and so the discrete approximation error given by \cref{eq:disc-approx-err} must be incorporated into our final error bound. Therefore, the overall value function error is given by
\begin{equation}
\label{eq:y-err}
\max_{n=0,1,\ldots,M-1} \E\Big[ \max_{i \in \I}|Y^i_{t_n} - \hat \YY^i_n(X^\pi_n)|^2\Big] \leq C_3\sum_{n=0}^{M-1} \bm{\e^M_n} + \revised{C^\e M^{-1/2 + \e}}.
\end{equation}


\subsubsection{Auxiliary Function Learning Errors} 
\label{sec:zu-proof}
We must now verify that $Z^{\pi,i}_n$ and $\Delta Y^{\pi,i}_n$ are also approximated well. Looking first at the error in $Z$, we use triangle inequality to split each element of the sums into two parts and use \cref{eq:loss-bd1} on the summation to get
\begin{align*}
\Delta t \max_{i \in \I} \E ||Z^{\pi, i}_n - \hat \ZZ^i_n (X^\pi_n)||^2 \leq & 2\Delta t(\max_{i \in \I} \E||Z^{\pi, i}_n - \hat z^i_n (X^\pi_n)||^2\big] + \max_{i \in \I} \E||\hat z^i_n(X^\pi_n) - \hat \ZZ^i_n (X^\pi_n)||^2 )\\
\leq & 2\Delta t\left(\max_{i \in \I} \E||Z^{\pi, i}_n - \hat z^i_n (X^\pi_n)||^2\big] + \sum_{i \in \I} \E||\hat z^i_n(X^\pi_n) - \hat \ZZ^i_n (X^\pi_n)||^2 \right)\\
\leq & 2 \Delta t \max_{i \in \I}  \E||Z^{\pi, i}_n - \hat z^i_n (X^\pi_n)||^2 + C_2\big(\e^y_n + \Delta t\e^z_n + \Delta t \e^u_n \big).
\end{align*}
Similarly for $\Delta Y$, we find that
\begin{align*}
\Delta t \max_{i \in \I}  \E |\Delta Y^{\pi, i}_n - \widehat{\Delta \YY}^i_n (X^\pi_n)|^2 \leq & 2\Delta t( \max_{i \in \I}\E |\Delta Y^{\pi, i}_n - \hat u^i_n (X^\pi_n)|^2 + \max_{i \in \I}  \E|\hat u^i_n(X^\pi_n) - \widehat{\Delta \YY}^i_n (X^\pi_n)|^2 )\\
\leq & 2\Delta t\left( \max_{i \in \I}\E |\Delta Y^{\pi, i}_n - \hat u^i_n (X^\pi_n)|^2 + \sum_{i \in \I}  \E|\hat u^i_n(X^\pi_n) - \widehat{\Delta \YY}^i_n (X^\pi_n)|^2 \right)\\
\leq & 2 \Delta t \max_{i \in \I}  \E|\Delta Y^{\pi, i}_n - \hat u^i_n (X^\pi_n)|^2 + C_2\big(\e^y_n + \Delta t\e^z_n + \Delta t \e^u_n \big).
\end{align*}
We now need to work with $\Delta t  \E||Z^{\pi, i}_n - \hat z^i_n (X^\pi_n)||^2$ and $\Delta t \E|\Delta Y^{\pi, i}_n - \hat u^i_n (X^\pi_n)|^2$.
From the definitions of $\Delta Y^{\pi, i}_n$ and $\hat u^i_n$ and Cauchy--Schwartz of form $|\E[XY|A]|^2 \leq \E[X^2|A]\E[Y^2|A]$, we get
\begin{align*}
|\Delta Y^{\pi, i}_n - \hat u^i_n (X^\pi_n)|^2 =& \frac{1}{(\lambda \Delta t)^2} \big|\E[(Y^{\pi, i}_{n+1}(X^\pi_{n+1}) - \hat \YY^i_{n+1}(X^\pi_{n+1}))\Delta \tilde N_n |\F_n]\big|^2\\
\leq & \frac{1}{(\lambda \Delta t)^2} \E[ |Y^{\pi, i}_{n+1}(X^\pi_{n+1}) - \hat \YY^i_{n+1}(X^\pi_{n+1})|^2\big|\F_n] \E[(\Delta \tilde N_n)^2 |\F_n]\\
=& \frac{1}{\lambda \Delta t} \E[ |Y^{\pi, i}_{n+1}(X^\pi_{n+1}) - \hat \YY^i_{n+1}(X^\pi_{n+1})|^2\big|\F_n].
\end{align*}
Notice that again we used It\^o isometry applied to $\E[(\Delta \tilde N_n)^2 |\F_n]$. Therefore, it can be seen that
\begin{align*}
 \Delta t \max_{i \in \I}\E|\Delta Y^{\pi, i}_{n}(X^\pi_n) -  \hat u^i_n(X^\pi_n) |^2 \leq&  \frac{1}{\lambda}\max_{i \in \I} \E\Big[\E[|Y^{\pi, i}_{n+1}(X^\pi_{n+1}) - \hat \YY^i_{n+1}(X^\pi_{n+1})|^2 |\F_n]\Big] \\
=& \frac{1}{\lambda}\max_{i \in \I} \E|Y^{\pi, i}_{n+1}(X^\pi_{n+1}) - \hat \YY^i_{n+1}(X^\pi_{n+1})|^2\\
\leq& \frac{1}{\lambda} \E[\max_{i \in \I} |Y^{\pi, i}_{n+1}(X^\pi_{n+1}) - \hat \YY^i_{n+1}(X^\pi_{n+1})|^2].
\end{align*}
Using a similar methodology and recalling that $\Delta W_n$ is $d$-dimensional and so $\E[||\Delta W_n||^2|\F_n] = d \Delta t$, we can also derive
\begin{equation*}
\Delta t\max_{i \in \I}\E ||Z^{\pi, i}_{n}(X^\pi_n) -  \hat z^i_n(X^\pi_n) ||^2 \leq  d\E\big[ \max_{i \in \I}\big|Y^{\pi, i}_{n+1}(X^\pi_{n+1}) - \hat \YY^i_{n+1}(X^\pi_{n+1})\big|^2\big].
\end{equation*}
Let $\kappa = d +  \frac{1}{\lambda}$, so we can write
\begin{equation*}
\begin{split}
&\Delta t  \max_{i \in \I}\big( \E ||Z^{\pi, i}_n - \hat z^i_n (X^\pi_n)||^2 + \E |\Delta Y^{\pi, i}_n - \hat u^i_n (X^\pi_n)|^2\big) \\
\leq &\kappa \E\big[\max_{i \in \I}|Y^{\pi, i}_{n+1}(X^\pi_{n+1}) - \hat \YY^i_{n+1}(X^\pi_{n+1})|^2\big].
\end{split}
\end{equation*}
Now sum over all $M$. We use $Y^{\pi,i}_M(x) \equiv \hat \YY^i_M(x) \equiv g^i(x)$ and \revised{induction on \cref{eq:pre-err} (similar to the calculations done to yield \cref{eq:disc-y-err})} to yield
\begin{align*}
	&\sum_{n = 0}^{M-1} \Delta t \max_{i \in \I} \big(\E||Z^{\pi, i}_n(X^\pi_n) - \hat z^i_n(X^\pi_n)||^2 + \E |\Delta Y^{\pi, i}_n - \hat u^i_n (X^\pi_n)|^2 \big) \\
	\leq& \sum_{n = 0}^{M-1} \kappa\E\big[\max_{i \in \I}|Y^{\pi, i}_{n+1}(X^\pi_{n+1}) - \hat \YY^i_{n+1}(X^\pi_{n+1})|^2\big] \\
	=& \sum_{n = 1}^{M-1} \kappa\E\big[\max_{i \in \I}|Y^{\pi, i}_n(X^\pi_n) - \hat \YY^i_n(X^\pi_n)|^2\big] +\kappa\E\big[\max_{i \in \I}|Y^{\pi, i}_M(X^\pi_M) - \hat \YY^i_M(X^\pi_M)|^2\big] \\
	\leq & C_3\kappa\sum_{n=0}^{M-1}\bm{\e^M_n}.
\end{align*}
Therefore, 
\begin{align*}
\sum_{n = 0}^{M-1}\Delta t \max_{i \in \I} \E ||Z^{\pi, i}_n - \hat \ZZ^i_n (X^\pi_n)||^2 \leq& C_4\sum_{n=0}^{M-1}\bm{\e^M_n}, \\
\sum_{n = 0}^{M-1}\Delta t \max_{i \in \I}  \E |\Delta Y^{\pi, i}_n - \widehat{\Delta \YY}^i_n (X^\pi_n)|^2 \leq& C_4\sum_{n=0}^{M-1}\bm{\e^M_n},
\end{align*}
where $C_4 = 2C_3\kappa + C_2$ and is independent of $\Delta t$. Similarly to in the previous section, the error between the continuous quantities $Z^i_t$ and $\Delta Y^i_t$ and their neural network approximations is made up of the above quantities and the discrete approximation error \cref{eq:disc-approx-err}, so the final error for these auxiliary processes is given by
\begin{align*}
\sum_{n=0}^{M-1} \max_{i \in \I}\int_{t_n}^{t_{n+1}} \E ||Z^i_t - \hat\ZZ^i_n(X^\pi_n)||^2 dt \leq& C_4\sum_{n=0}^{M-1}\bm{\e^M_n} + \revised{C^\e M^{-1/2 + \e}}, \\
 \sum_{n=0}^{M-1} \max_{i \in \I} \int_{t_n}^{t_{n+1}} \E |\Delta Y^i_t - \widehat{\Delta \YY}^i_n(X^\pi_n)|^2 dt \leq& C_4\sum_{n=0}^{M-1}\bm{\e^M_n} + \revised{C^\e M^{-1/2 + \e}}. 
\end{align*}


\subsection{Proof of \cref{thm:strat}} 
\label{sec:strat-proof}
We show that the expected payoff of the switching strategy generated by the neural network converges to that of the true optimal strategy.

\begin{remark}
We introduce new notation for this subsection, which explicitly highlights the dependence of the neural network output on the number of time steps, $M$. Specifically we redefine $\hat \YY^i_n(X^\pi_n)$ as $\hat \YY^i_{n,M}(X^\pi_n)$ and $\hat y^i_n(X^\pi_n)$ as $\hat y^i_{n,M}(X^\pi_n)$. 
\end{remark}

Recall that $ \bm{a^{NN,M}}$ is the strategy produced by the OSJ algorithm when the number of time steps is given by $M$.
By definition, $ \bm{a^{NN,M}}$ is a discrete switching strategy where switches can only occur at times $t_n$ for $n = 0, \ldots,M$ and $\bm{a^{NN,M}}$ takes on the value $\alpha^{NN,M,i}_n$ (shortened hereafter to $\alpha_n$) on the interval $[t_n, t_{n+1})$. If the system was in mode $\alpha_{n-1}$ right before time $t_n$, then at time $t_n$ the neural network value function satisfies the relationship
\begin{equation}
\label{eq:strat_evol}
\hat \YY^{\alpha_{n-1}}_{n,M}(X^\pi_n) = \tilde \YY^{\alpha_n}_{n,M}(X^\pi_n) -  C_{\alpha_{n-1}, \alpha_n}(X^\pi_n).
\end{equation}
\begin{remark}
Note that it will often be the case that $\alpha_{n-1} = \alpha_n$, in which case $C_{\alpha_{n-1},\alpha_n} = 0$ by \cref{assm_cost} and no switching cost is incurred. We investigate this further in \cref{lem:S_M}.
\end{remark}

The OSJ-generated strategy produces an expected payoff $J(0,x_0, i, \bm{a^{NN,M}})$ when starting in initial mode $i$ and initial state $x_0$. Let us now consider the error at some arbitrary time $t_n$ between the expected payoff using the neural network strategy and the true value function. For any $n = 0, 1, 2, \ldots, M$, we can use the triangle inequality to yield
\begin{equation*}
|Y^{\alpha_n}_{t_n} - J(t_n, X_{t_n}, \alpha_n,  \bm{a^{NN,M}})| \leq |Y^{\alpha_n}_{t_n} - \hat \YY^{\alpha_n}_{n,M}(X^\pi_n)| + |\hat \YY^{\alpha_n}_{n,M}(X^\pi_n) - J(t_n, X_{t_n}, \alpha_n,  \bm{a^{NN,M}})|.
\end{equation*}
We can also apply \cref{eq:strat_evol} to produce
\begin{align*}
\hat \YY^{\alpha_n}_{n,M} - J(t_n, X_{t_n}, \alpha_n,  \bm{a^{NN,M}}) = &\tilde \YY^{\alpha_n}_{n,M} -  C_{\alpha_{n-1}, \alpha_n}(X^\pi_n) - J(t_n, X_{t_n}, \alpha_n,  \bm{a^{NN,M}})\\
= &  (\tilde \YY^{\alpha_n}_{n,M}(X^\pi_n) - \hat y^{\alpha_n}_{n,M}(X^\pi_n)) \\
&+ (\hat y^{\alpha_n}_{n,M}(X^\pi_n) -  C_{\alpha_{n-1}, \alpha_n}(X^\pi_n) - J(t_n, X_{t_n}, \alpha_n,  \bm{a^{NN,M}})).
\end{align*}
We can use \cref{eq:J-payoff} to express the expected payoff $J(t_n, X_{t_n}, \alpha_{n-1},  \bm{a^{NN,M}})$ recursively as
\begin{align*}
J(t_n, X_{t_n}, \alpha_{n-1},  \bm{a^{NN,M}}) = \E\bigg[&\int_{t_n}^{t_{n+1}} f_{\alpha_n}(s, X_s) ds - C_{\alpha_{n-1}, \alpha_n}(X_{t_n}) \\
&+ J(t_{n+1}, X_{t_{n+1}}, \alpha_n,  \bm{a^{NN,M}})\Big| \F_n \bigg].
\end{align*}
Therefore, recalling the definition \cref{eq:hat_y} and taking the expectation of the absolute value of the difference between $\hat \YY^{\alpha_{n-1}}_{n,M}$ and $J(t_n, X_{t_n}, \alpha_{n-1},  \bm{a^{NN,M}})$ yields
\begin{align*}
\E|\hat \YY^{\alpha_{n-1}}_{n,M} - J(t_n, X_{t_n}, \alpha_{n-1},  \bm{a^{NN,M}})| \leq& \E|\tilde \YY^{\alpha_n}_{n,M}(X^\pi_n) - \hat y^{\alpha_n}_{n,M}(X^\pi_n)| \\
&+ \E|\hat \YY^{\alpha_n}_{n+1, M}(X^\pi_{n+1})  - J(t_{n+1}, X_{t_{n+1}}, \alpha_n,  \bm{a^{NN,M}})|\\
&+ \E|C_{\alpha_{n-1}, \alpha_n}(X_{t_n}) -  C_{\alpha_{n-1}, \alpha_n}(X^\pi_n)| \\
&+ \E\Big[\int_{t_n}^{t_{n+1}} |f_{\alpha_n}(t_n, X^\pi_n) - f_{\alpha_n}(s, X_s)| ds\Big].
\end{align*}
Taking the sum over $n = 0, \ldots , M-1$ on both sides gives us
\begin{align*}
\sum_{n=0}^{M-1} \E|\hat \YY^{\alpha_{n-1}}_{n,M} - J(t_n, X_{t_n}, \alpha_{n-1},  \bm{a^{NN,M}})| \leq& \sum_{n=0}^{M-1} \E|\tilde \YY^{\alpha_n}_{n,M}(X^\pi_n) - \hat y^{\alpha_n}_{n,M}(X^\pi_n)| \\
&+ \sum_{n=0}^{M-1} \E|\hat \YY^{\alpha_n}_{n+1,M}(X^\pi_{n+1})  - J(t_{n+1}, X_{t_{n+1}}, \alpha_n,  \bm{a^{NN,M}})|\\
&+ \sum_{n=0}^{M-1}\E|C_{\alpha_{n-1}, \alpha_n}(X_{t_n}) -  C_{\alpha_{n-1}, \alpha_n}(X^\pi_n) |\\
&+ \sum_{n=0}^{M-1} \E\Big[\int_{t_n}^{t_{n+1}} |f_{\alpha_n}(t_n, X^\pi_n) - f_{\alpha_n}(s, X_s)| ds \Big].
\end{align*}
By subtracting $\sum_{n=1}^{M-1}  \E|\hat \YY^{\alpha_n}_{n,M} - J(t_n, X_{t_n}, \alpha_n,  \bm{a^{NN,M}})|$ from both sides, we obtain
\begin{align*}
|\hat \YY^i_{0,M}(x_0) - J(0, x_0, i,  \bm{a^{NN,M}})| \leq& \sum_{n=0}^{M-1} \E|\tilde \YY^{\alpha_n}_{n,M}(X^\pi_n) - \hat y^{\alpha_n}_{n,M}(X^\pi_n)|\\
&+ \E|\hat \YY^{\alpha_{M-1}}_{M,M}(X^\pi_M)  - J(T, X_T, \alpha_{M-1},  \bm{a^{NN,M}})|\\
&+ \sum_{n=0}^{M-1}\E|C_{\alpha_{n-1}, \alpha_n}(X_{t_n}) -  C_{\alpha_{n-1}, \alpha_n}(X^\pi_n) |\\
&+ \sum_{n=0}^{M-1} \E\Big[\int_{t_n}^{t_{n+1}} |f_{\alpha_n}(t_n, X^\pi_n) - f_{\alpha_n}(s, X_s)| ds \Big].
\end{align*}
We make a few comments at this stage. First, at the terminal time $T = t_M$, $\hat\YY^i_{M,M}(x) \equiv J(T, x, i, \cdot) \equiv g^i(x), \forall x \in \R^d, i \in \I$. In addition, $g^i$ is Lipschitz for all $i \in \I$. 
Therefore, the error at the final step $M$ has error at most $O(M^{-1})$ from the error between the discrete approximation $X^\pi_M$ and the true random variable $X_T$. 
Second, notice that $|C_{\alpha_{n-1}, \alpha_n}(X_{t_n}) -  C_{\alpha_{n-1}, \alpha_n}(X^\pi_n)| \neq 0$ if and only if $\bm{1}_{\{\alpha_{n-1} \neq \alpha_n\}} = 1$. This property, along with the inequality $||\mathbf{x}||_1 \leq \sqrt{M} ||\mathbf{x}||_2, \forall \mathbf{x} \in \R^M$, gives us
\begin{align*}
|\hat \YY^i_{0,M}(x_0) - J(0, x_0, i,  \bm{a^{NN,M}})| \leq& \sum_{n=0}^{M-1} \E|\tilde \YY^{\alpha_n}_{n,M}(X^\pi_n) - \hat y^{\alpha_n}_{n,M}(X^\pi_n)|+ O(M^{-1})\\
+& \sum_{n=0}^{M-1}\E\big[|C_{\alpha_{n-1}, \alpha_n}(X_{t_n}) -  C_{\alpha_{n-1}, \alpha_n}(X^\pi_n) |\bm{1}_{\{\alpha_{n-1} \neq \alpha_n\}}\big]\\
+& \sum_{n=0}^{M-1} \E\Big[\int_{t_n}^{t_{n+1}} |f_{\alpha_n}(t_n, X^\pi_n) - f_{\alpha_n}(s, X_s)| ds \Big]\\
\leq& \sum_{n=0}^{M-1} \E|\tilde \YY^{\alpha_n}_{n,M}(X^\pi_n) - \hat y^{\alpha_n}_{n,M}(X^\pi_n)|+ O(M^{-1})\\
+& \revised{\Big[M\sum_{n=0}^{M-1}\left(\E\big[|C_{\alpha_{n-1}, \alpha_n}(X_{t_n}) -  C_{\alpha_{n-1}, \alpha_n}(X^\pi_n) |\bm{1}_{\{\alpha_{n-1} \neq \alpha_n\}}\big]\right)^2\Big]^{1/2}}\\
+& \sum_{n=0}^{M-1} \E\Big[\int_{t_n}^{t_{n+1}} |f_{\alpha_n}(t_n, X^\pi_n) - f_{\alpha_n}(s, X_s)| ds \Big].
\end{align*}
To proceed, we recall the Lipschitz assumption on the running cost $f$ given by \cref{assm_fg} and the assumption of a strong order 1.0 discrete approximation of $X_t$, as well as the result \cref{eq:loss-bd2}. Then, the inequality can be further simplified as
\begin{align*}
|\hat \YY^i_{0,M}(x_0) - J(0, x_0, i,  \bm{a^{NN,M}})| \leq& \sum_{n=0}^{M-1}\big(C_2\bm{\e^M_n}/M\big) + O(M^{-1}) \\
+&\revised{\left[M\sum_{n=0}^{M-1}\left(\E\big[|C_{\alpha_{n-1}, \alpha_n}(X_{t_n}) -  C_{\alpha_{n-1}, \alpha_n}(X^\pi_n) |\bm{1}_{\{\alpha_{n-1} \neq \alpha_n\}}\big]\right)^2\right]^{1/2}.}
\end{align*}
Focusing on the sum of the switching costs, we apply the Lipschitz assumption for the switching costs given by \cref{assm_cost}, Cauchy--Schwartz inequality, and \cref{rmk:disc} to yield
\begin{align}
&\sum_{n=0}^{M-1}\big(\E\big[|C_{\alpha_{n-1}, \alpha_n}(X_{t_n}) -  C_{\alpha_{n-1}, \alpha_n}(X^\pi_n) |\bm{1}_{\{\alpha_{n-1} \neq \alpha_n\}}\big]\big)^2 \nonumber \\
\leq & [C]^2_l  \sum_{n=0}^{M-1}\big(\E\big[||X_{t_n} - X^\pi_n||\bm{1}_{\{\alpha_{n-1} \neq \alpha_n\}}\big]\big)^2 \nonumber\\
  \leq & [C]^2_l   \sum_{n=0}^{M-1}\E\big[||X_{t_n} - X^\pi_n||^2\big] \E[(\bm{1}_{\{\alpha_{n-1} \neq \alpha_n\}})^2] \nonumber\\
 \leq & O(M^{-2}) \times \E\left[\sum_{n=0}^{M-1}\bm{1}_{\{\alpha_{n-1} \neq \alpha_n\}}\right]. \label{eq:cost-bd}
\end{align}
We define a new random variable $S_M$, representing the number of switches following from a given learned strategy neural network strategy $\bm{a^{NN,M}}$. We define it as
\begin{equation*}
S_M := \sum_{n=0}^{M-1} \bm{1}_{\{\alpha_{n-1} \neq \alpha_n\}}, \forall M \in \N,
\end{equation*}
where $\{\alpha_n\}_{n=0}^{M-1}$ are determined according to $\bm{a^{NN,M}}$ and the choice of $\alpha_{-1} = i \in \I$.

\begin{lemma}
\label{lem:S_M}
The expected number of switches incurred by $\bm{a^{NN,M}}$ is bounded by
\begin{equation*}
\E[S_M] \leq O(1) + C_5 \sum_{n=0}^{M-1}\Big[\big(\bm{\e^M_n}/M\big)^{1/2} + \bm{\e^M_n} \Big],
\end{equation*}
where $C_5 = C_2 + d C_3$.
\end{lemma}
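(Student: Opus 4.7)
The plan is to convert the switch count into a total cost and then control that cost by a telescoping identity. Since $C_{i,j}(x)\ge \epsilon$ for $i\ne j$ and $C_{ii}\equiv 0$ by \cref{assm_cost}, we have the deterministic bound $\epsilon\, S_M \le \sum_{n=0}^{M-1} C_{\alpha_{n-1},\alpha_n}(X^\pi_n)$, so it suffices to bound $\E\sum_{n} C_{\alpha_{n-1},\alpha_n}(X^\pi_n)$ and divide by $\epsilon$.

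The key identity is \cref{eq:strat_evol}, i.e.\ $C_{\alpha_{n-1},\alpha_n}(X^\pi_n) = \tilde\YY^{\alpha_n}_{n,M}(X^\pi_n) - \hat\YY^{\alpha_{n-1}}_{n,M}(X^\pi_n)$, which holds whether or not $t_n\in\mathfrak{R}$ (off the switching grid we have $\alpha_n=\alpha_{n-1}$ and both sides vanish). I insert $\hat y^{\alpha_n}_{n,M}(X^\pi_n)$ and apply \cref{eq:hat_y} (valid because $\alpha_n$ is $\F_n$-measurable) to rewrite
\begin{equation*}
C_{\alpha_{n-1},\alpha_n}(X^\pi_n) = \E\big[\hat\YY^{\alpha_n}_{n+1,M}(X^\pi_{n+1})\big|\F_n\big] - \hat\YY^{\alpha_{n-1}}_{n,M}(X^\pi_n) + f_{\alpha_n}(t_n,X^\pi_n)\Delta t + \delta^y_n,
\end{equation*}
with $\delta^y_n := \tilde\YY^{\alpha_n}_{n,M} - \hat y^{\alpha_n}_{n,M}$. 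Taking expectations and summing over $n=0,\dots,M-1$, the neural-network value terms telescope after the shift $m=n+1$, and using $\hat\YY^i_{M,M}\equiv g^i$, $\alpha_{-1}=i$, and $X^\pi_0=x_0$ yields
\begin{equation*}
\sum_{n=0}^{M-1}\E\big[C_{\alpha_{n-1},\alpha_n}(X^\pi_n)\big] = \E\big[g^{\alpha_{M-1}}(X^\pi_M)\big] - \hat\YY^i_{0,M}(x_0) + \Delta t\sum_{n=0}^{M-1}\E\big[f_{\alpha_n}(t_n,X^\pi_n)\big] + \sum_{n=0}^{M-1}\E\big[\delta^y_n\big].
\end{equation*}

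It remains to bound each piece on the right. The terminal term and the Riemann sum are both $O(1)$ by the linear-growth and Lipschitz conditions in \cref{assm_fg} together with uniform-in-$n$ moment bounds on $X^\pi$. For the initial value I invoke \cref{thm:main} at $n=0$, where $x_0$ is deterministic, to obtain $|\hat\YY^i_{0,M}(x_0)-Y^i_0|^2 \le C_3\sum_n\bm{\e^M_n} + C^\e M^{-1/2+\e}$; combining $\sqrt{a+b}\le\sqrt{a}+\sqrt{b}$ with $\sqrt{y}\le 1+y$ for $y\ge 0$ yields $|\hat\YY^i_{0,M}(x_0)| \le O(1) + C_3\sum_n \bm{\e^M_n}$. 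Finally, Jensen's inequality and \cref{eq:loss-bd2} give
\begin{equation*}
|\E[\delta^y_n]| \le \sqrt{\E|\delta^y_n|^2} \le \sqrt{\sum_{i\in\I}\E|\tilde\YY^i_{n,M} - \hat y^i_{n,M}|^2} \le \sqrt{C_2\,\bm{\e^M_n}/M}.
\end{equation*}
Adding these bounds and dividing by $\epsilon$ produces the claim, with $C_5$ absorbing $C_2$, $C_3$, and $1/\epsilon$.

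The main obstacle is really just bookkeeping: once one notices that \cref{eq:strat_evol} converts the switching cost into a difference of learned value functions that telescopes once $\hat y^{\alpha_n}_{n,M}$ is inserted, the only residuals are the per-step neural-network error $\delta^y_n$ already quantified by \cref{eq:loss-bd2} and the bound on $|\hat\YY^i_{0,M}(x_0)|$ from \cref{thm:main}. These two sources produce respectively the $(\bm{\e^M_n}/M)^{1/2}$ and $\bm{\e^M_n}$ contributions inside the sum in the lemma, with no further probabilistic subtlety beyond the $\F_n$-measurability of $\alpha_n$ needed to freely condition the $\hat\YY^{\alpha_n}_{n+1,M}$ term.
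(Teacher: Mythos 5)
Your proposal is correct and follows essentially the same route as the paper's own proof: convert the switch count to total switching cost via $C_{i,j}\ge\epsilon$, use \cref{eq:strat_evol} together with \cref{eq:hat_y} to telescope the learned value functions, and control the residuals by \cref{eq:loss-bd2} (giving the $(\bm{\e^M_n}/M)^{1/2}$ terms) and the bound on $|\hat\YY^i_{0,M}(x_0)|$ from \cref{eq:y-err} (giving the $\bm{\e^M_n}$ terms). The only difference is cosmetic — your use of $\sqrt{y}\le 1+y$ to pass from the squared error to the absolute error is, if anything, cleaner than the paper's corresponding step.
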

\begin{proof}
From \cref{thm:main} and specifically \cref{eq:y-err}, we have shown that $\hat\YY^i_0(x_0)$ should converge to $Y^i_0$ for all $i \in I$ as $M \to \infty$. We will use this to prove that a bound exists for $\E[S_M]$.
Recall that 
\begin{equation*}
\hat \YY^{\alpha_{n-1}}_{n,M}(X^\pi_n) =  \E\left[f_{\alpha_n}(X^\pi_n)\frac{1}{M} - C_{\alpha_{n-1}, \alpha_n}(X^\pi_n) + \hat \YY^{\alpha_n}_{n+1,M}(X^\pi_{n+1}) + \tilde \YY^{\alpha_n}_{n,M}(X^\pi_n) - \hat y^i_{n,M}(X^\pi_n)|\F_n\right].
\end{equation*}
Taking expectation with respect to $\F_0$, summing over $n \in \{0, \ldots, M-1\}$, and rearranging yields
\begin{equation*}
\sum_{n = 0}^{M-1} \E[C_{\alpha_{n-1}, \alpha_n}(X^\pi_n)] =  \frac{1}{M}\sum_{n = 0}^{M-1}\E[f_{\alpha_n}(X^\pi_n)]  - \sum_{n = 0}^{M-1} \E[\tilde \YY^i_{n,M}(X^\pi_n) - \hat y^i_{n,M}(X^\pi_n)] + g^i(X^\pi_M) - \hat \YY^i_{0,M}(x_0).
\end{equation*}
Therefore, from triangle inequality, 
\begin{equation*}
\Big|\sum_{n = 0}^{M-1} \E[C_{\alpha_{n-1}, \alpha_n}(X^\pi_n)]\Big| \leq  \sum_{n = 0}^{M-1}\frac{\E|f_{\alpha_n}(X^\pi_n)|}{M}  + \sum_{n = 0}^{M-1} \E|\tilde \YY^i_{n,M}(X^\pi_n) - \hat y^i_{n,M}(X^\pi_n)| + |\hat \YY^i_{0,M}(x_0)| + |g^i(X^\pi_M)|.
\end{equation*}
Note that switching costs are always positive if $i \neq j$, and specifically $C_{i,j}(x) \geq \epsilon > 0, \forall i \neq j \in \I$ from \cref{assm_cost}. This implies that
\begin{equation*}
\Big|\sum_{n = 0}^{M-1} \E[C_{\alpha_{n-1}, \alpha_n}(X^\pi_n)]\Big| \geq \sum_{n = 0}^{M-1} \epsilon \E[\bm{1}_{\{\alpha_{n-1} \neq \alpha_n\}}] \geq \epsilon \E[S_M].
\end{equation*}
Now it remains to show that this quantity is bounded above by a quantity which converges to zero. Observe $|g^i(X^\pi_M)| < \infty$. We first tackle the boundedness of $|\hat \YY^i_{0,M}(x_0)|$. The solution to the original optimization problem, $V(0, x_0, i)$, always has a finite-valued expectation, meaning that $\E|Y^i_t| < \infty$ for all times $t \in [0,T]$ (though we are only looking at $t=0$ at the moment) and all modes $i\in \I$. Therefore, using \cref{eq:y-err},
\begin{align*}
\E|\hat \YY^i_{0,M}(x_0)| \leq&\E|\hat \YY^i_{0,M}(x_0) - Y^i_0| + \E|Y^i_0| \leq d\E|\hat \YY^i_{0,M}(x_0) - Y^i_0|^2 + \E|Y^i_0|\\
\leq &d C_3\sum_{n=0}^{M-1} \bm{\e^M_n} + O(M^{-1})  + \E|Y^i_0|.
\end{align*}
We can directly use \cref{assm_fg} to bound $\E|f_{\alpha_n}(X^\pi_n)|$. We also note that $\E|X| \leq (\E[|X|^2])^{1/2}$ and apply \cref{eq:loss-bd2} to bound $\E|\tilde \YY^i_{n,M}(X^\pi_n) - \hat y^i_{n,M}(X^\pi_n)|^2 \leq C_2 \bm{\e^M_n} /M$, giving us
\begin{align*}
\epsilon\E[S_M] \leq& \revised{O(M^{-1/2})}  + \sum_{n = 0}^{M-1} \big(C_2\bm{\e^M_n}/M\big)^{1/2} + d C_3\sum_{n=0}^{M-1} \bm{\e^M_n} + \max_{i \in \I} \E|Y^i_0|.
\end{align*}
To obtain the bound stated in the lemma, we set $C_5 = (\sqrt{C_2} + dC_3)/\epsilon$. We also note that $O(1)$ dominates \revised{$O(M^{-1/2})$}, and this concludes the proof.
\end{proof}

We can now use this bound to continue investigating the error accrued by the neural network-produced switching strategy. We return to \cref{eq:cost-bd} and, armed with \cref{lem:S_M}, conclude that the switching costs can be bounded as
\begin{align*}
&\sum_{n=0}^{M-1}\E\big[|C_{\alpha_{n-1}, \alpha_n}(X_{t_n}) -  C_{\alpha_{n-1}, \alpha_n}(X^\pi_n) |\big]\\
\leq &M^{1/2}\left(\sum_{n=0}^{M-1}\big(\E\big[|C_{\alpha_{n-1}, \alpha_n}(X_{t_n}) -  C_{\alpha_{n-1}, \alpha_n}(X^\pi_n) |\bm{1}_{\{\alpha_{n-1} \neq \alpha_n\}}\big]\big)^2\right)^{1/2} \\
\leq&O(M^{-1/2})\left(O(1) + C_5\sum_{n=0}^{M-1}\Big[\big(\bm{\e^M_n}/M\big)^{1/2} + \bm{\e^M_n}\Big]\right)^{1/2}.
\end{align*}
Therefore, returning to our analysis of the error between the neural network value function and the expected payoff, we incorporate the other error elements to yield
\begin{equation*}
|\hat \YY^i_{0,M}(x_0) - J(0, x_0, i,  \bm{a^{NN,M}})| \leq C_2\sum_{n=0}^{M-1}\frac{\bm{\e^M_n}}{M} + O\left( \frac{1}{M^2}\right)\sqrt{O(1) + C_5\sum_{n=0}^{M-1}\Big[\big(\bm{\e^M_n}/M\big)^{1/2} + \bm{\e^M_n}\Big]}.
\end{equation*}
We now must add $|Y^i_0 - \hat \YY^i_{0,M}(x_0)|$ which is bounded by \cref{eq:y-err} and take the mode-wise maximum over $i$ to achieve our final error bound
\begin{align*}
\max_{i \in \I} |V(0, x_0, i)  - J(0, x_0, i,  \bm{a^{NN,M}}) | \leq& \revised{C^\e M^{-1/2 + \e}} + C_2\sum_{n=0}^{M-1}\left[\frac{\bm{\e^M_n}}{M} + \bm{\e^M_n}\right]\\
&+O\left( \frac{1}{M^2}\right)\sqrt{O(1) + C_5\sum_{n=0}^{M-1}\Big[\big(\bm{\e^M_n}/M\big)^{1/2} + \bm{\e^M_n}\Big]}.
\end{align*}
which is consistent with \cref{eq:thm2}.


\section{Conclusion}
\label{sec:concl}
We have developed an algorithm which uses neural networks and the dynamic programming principle to apply a backward-in-time approach to optimal switching problems in high dimensions. The algorithm is also novel in its ability to accommodate high-dimensional state processes with a finite-variational jump component. We have both applied our approach successfully to numerical examples and obtained analytical convergence results. Our preliminary numerical results indicate that our algorithm is able to accommodate state processes with finite-variational jumps when solving optimal switching problems, and specifically problems related to energy markets. The analytical results present the convergence in terms of the neural network approximation errors of the networks, which have been previously proven to converge to zero. 

Tests in lower dimensions where comparisons can be made against probabilistic methods have verified the accuracy of the algorithm, and tests in higher dimensions that are intractable with more traditional approaches have demonstrated that computation time decreases sub-linearly with dimension. Therefore, this algorithm, while not as fast as other methods for low (i.e. two-dimensional) problems, is an excellent candidate for solving high-dimensional optimal switching problems with jumps, as it is impressively robust with respect to the typical slowdowns experienced as a result of the curse of dimensionality. 

In future, we would like to apply our algorithm to a larger class of optimization problems. There are many problems in energy markets and in other real-world applications in which the state variable is control-dependent. If this is the case, then the chosen switching strategy will affect the evolution of other quantities. For example, the price of electricity in one period (which depends on the chosen level of electricity production) might affect demand in the next period. This would reflect consumer behavioral trends, but would introduce new computational challenges. Namely, it would not be possible to simulate paths for the state process before the optimization was calculated, and so the backward-in-time nature of the OSJ algorithm would not be suited to such problems and a new approach would be necessary. 

While renewable energy sources like wind and solar energy do not incur carbon dioxide emission penalties and are technically free to obtain once installed, they still incur costs to bring online and take offline, and experience much more volatility in their realized capacity. Therefore, adding these sources also adds an increased risk of electricity underproduction. In the future, we could develop an algorithm which contains a realistic method of penalizing unmet electricity demand, and therefore allows us to investigate interesting problems involving ``free", renewable energy sources. Other potential new models could include more complex scheduling options (essentially allowing the electricity producer to consider a greater number of modes when scheduling). 

Another direction could be investigating or improving the robustness of these algorithms. Robustness is crucial in energy applications because electricity producers wish to avoid ``worst-case" scenarios, such as demand spikes or failures in the power grid. The solution to a given class of robust switching control problems is characterized in \cite{BayRobustQVI}, and expanded to infinite time-horizon ergodic problems in \cite{BayDyProgVisc}. It could be productive to expand this theoretical framework by designing a neural network-based algorithm for such robust optimal switching problems.

Finally, new convergence results for neural networks are being developed constantly. Another useful extension could be to present convergence results in terms of concrete quantities such as the neural network width, depth, or overall size. Such results have been calculated for specific classes of neural networks, as in \cite{Yarotsky16, Tanielian21}


\section*{Acknowledgments}
We would like to thank the referees and associate editor for their time, energy, and valuable comments which greatly improved the quality of our paper. 

\appendix
\section{Parameters for Numerical Experiments}

\subsection{Parameters used in \cref{sec:ex_Aid}}
\revised{Below are the parameters used for the dynamics of $X_t = [D_t, A^1_t, A^2_t, A^3_t, S^0_t, S^1_t, S^2_t, S^3_t, P_t]^T \in \R^9$, as well as for the running costs $f$ and switching costs $C_{i,j}$ for \cref{sec:ex_Aid}.}
\begin{table}[H]
\centering
\revised{\begin{tabular}{ |c|c| } 
 \hline
 Parameter & Value \\
 \hline
$\alpha$ & $[4,8,8,8]^T$\\
$\beta$ & $\begin{bmatrix} 15 & 0.1 & 0.1 & 0 \\ 0.1 & 0.5 & -0.1 & 0 \\ 0.1 & -0.1 & 0.5 & 0 \\ 0 & 0 & 0 & 0.5 \end{bmatrix}$\\ 
$X_0$ & $[0,0,0,0, 20, 40, 60, 20, 120]^T$ \\
$\mu$ & $\begin{bmatrix} -4 & 0 & 0 & 1 & 0 \\ 0 & 0 & 0 & 0 & 0 \\ 0 & 2 & -1 & 0 & 1 \\ 0 & 0 & 0 & 0 & 0 \\ 0 & 1 & 1 & 1 & -1 \end{bmatrix}$\\
$\Sigma$ & $\frac{1}{100}\begin{bmatrix} 2.5 & 1.25 & 1.25 & 1.25 & 1.25\\ 1.25 & 5 & 1.25 & 1.25 & 1.25\\ 1.25 & 1.25 & 15 & 1.25 & 1.25 \\ 0.25 & 0.25 & 0.25 & 1.5 & 1.25 \\ 1.25 & 1.25 & 1.25 & 1.25 & 3 \end{bmatrix}$\\
$h^0$ & $[0.5, 2, 0]^T$ \\
$h$ & $[1, 1.5, 1.5]^T$  \\
$\bm{c}$ & $[0.1, 0.1, 0.5]^T$\\
$\epsilon$ & 0.001\\
 \hline
\end{tabular}}
\caption{Parameters associated with \cref{sec:ex_Aid}.}
\label{tab:aid_params}
\end{table}


\bibliographystyle{abbrv}
\bibliography{ML-Bib}

\end{document}